\newtheorem{theorem}{Theorem}[section]
\newtheorem{remark}{Remark}[section]
\newtheorem{example}{Example}[section]
\newtheorem{lemma}{Lemma}[section]
\newtheorem{definition}{Definition}[section]
\def\soft{soft rejection sampling}
\def\WTGL{hard rejection sampling}
\def\WTGLC{Hard rejection sampling}
\def\return{\textbf{return} }
\def\restart{\textbf{restart}}
\def\assume{\textbf{Assumptions:} }
\def\X{ \textbf{X} }
\def\Y{ \textbf{Y} }
\def\WTGLTIME{ \text{\rm Time$_{\rm rej}$}}
\def\TIME{ \text{\rm Time}}
\def\speedup{\text{\rm speedup}}
\def\TIMEDDSH{\text{\rm Time$_{\rm DDSH}$}}
\def\TIMECDSH{\text{\rm Time$_{\rm CDSH}$}}
\def\XI{\textbf{$X^{(I)}$}}
\def\xI{\textbf{$x^{(I)}$}}
\def\EI{E^{(I)}}
\def\yI{y^{(I)}}
\def\tI{t_I}
\def\P{{\mathbb P}}
\def\PDCTSH{PDC deterministic second half}
\def\Var{\text{Var}}
\def\A{\mathcal{A}}
\def\B{\mathcal{B}}
\def\L{\mathcal{L}}
\def\R{\mathbb{R}}
\def\e{\mathbb{E} }
\def\bfa{{\bf a}}
\def\bfw{{\bf w}}
\def\XI{\textbf{$X^{(I)}$}}
\def\xI{\textbf{$x^{(I)}$}}
\def\EI{E^{(I)}}
\def\yI{y_I}
\def\ts{\hskip.03in}
\newcommand{\ignore}[1]{ }
\begin{document}
\title[PDC Deterministic Second Half]{Probabilistic divide-and-conquer: \\ deterministic second half}
\author{Stephen DeSalvo}
\address{UCLA Department of Mathematics, 520 Portola Plaza, Los Angeles, CA, 90095}
\email{stephendesalvo@math.ucla.edu}

\date{\today}

\begin{abstract}
We present a probabilistic divide-and-conquer (PDC) method for \emph{exact} sampling of conditional distributions of the form $\L( {\bf X}\, |\, {\bf X} \in E)$, where ${\bf X}$ is a random variable on $\mathcal{X}$, a complete, separable metric space, and event~$E$ with $\mathbb{P}(E) \geq 0$ is assumed to have sufficient regularity such that the conditional distribution exists and is unique up to almost sure equivalence. 
The PDC approach is to define a decomposition of $\mathcal{X}$ via sets $\mathcal{A}$ and $\mathcal{B}$ such that $\mathcal{X} = \mathcal{A} \times \mathcal{B}$, and sample from each separately. 
The deterministic second half approach is to select the sets $\mathcal{A}$ and $\mathcal{B}$ such that for each element $a\in \mathcal{A}$, there is only one element $b_a \in \mathcal{B}$ for which $(a,b_a)\in E$.  
We show how this simple approach provides non-trivial improvements to several conventional random sampling algorithms in combinatorics, and we demonstrate its versatility with applications to sampling from sufficiently regular conditional distributions. 
\end{abstract}

\maketitle


\section{Introduction}

The random sampling problem which motivates us is the following.
For any integer $n\geq 1$, let $\X_n = (X_1, X_2, \ldots, X_n)$ denote the $\R^n$--valued joint distribution with coordinates independent random variables with known marginal distributions~$\L(X_1)$, $\L(X_2)$, $\ldots$, $\L(X_n)$; denote by $\mathcal{F}$ the Borel sigma-algebra of measurable sets on $\R^n$.  
Given an event $E_n \in \mathcal{F}$, we define the distribution of $\X_n'$ as
\begin{equation}
\label{distribution}
\L(\X'_n) := \L\left((X_1, X_2, \ldots, X_n)\ \Big|\ \X_n \in E_n \right).
\end{equation}
Many random sampling problems of interest can be described in this manner; see for example \cite{ArratiaTavare, IPARCS, Duchon,Fristedt}.  

It shall be convenient to consider in these first few sections a more abstract version of the sampling problem, although we stress that one loses very little intuition by continuing to think in terms of~\eqref{distribution}: let $\mathcal{X}$ denote a metric space, $\mathcal{F}$ the Borel sigma-algebra of measurable sets on $\mathcal{X}$, and $\X$ an~$\mathcal{F}$-measurable random variable which induces a Radon (tight) probability measure $\lambda$ on $\mathcal{X}$.
Let $\mathcal{Y}$ denote another metric space, with $\mathcal{G}$ the Borel sigma-algebra of measurable sets on $\mathcal{Y}$. 
Suppose $T$ is a measurable map from $(\mathcal{X}, \mathcal{F})$ into $(\mathcal{Y}, \mathcal{G})$. 
Then, given the event $E = \{T = t\}$, we wish to sample from 
\begin{equation}\label{abstract:distribution} \L(\X') = \L(\, \X\, |\, T = t\, ). \end{equation}
In Section~\ref{example}, we give an example where this generality allows us to sample from a distribution which is not otherwise accessible by elementary conditioning. 

Next, we distinguish between \emph{approximate} and \emph{exact} sampling.  
By approximate sampling, we mean an algorithm such as a (forward) Markov chain (but \emph{not} coupling from the past~\cite{ProppWilson}) or a Gibbs sampler~\cite{GemanGeman}, which (unless one already starts with a random sample from the desired distribution) approaches the desired distribution ``in the limit," so that after any deterministically fixed, finite time there is an inherent amount of bias in the algorithm, albeit often acceptably small for many applications.  
By exact sampling, we mean an algorithm which terminates in finite time with probability 1 with a sample from the given distribution. 
See the recent book~\cite{huber2015perfect} for further examples of exact sampling, where it is also referred to as \emph{perfect simulation}.  \\

\begin{center}
 \emph{{\bf Goal:} Generate \emph{exact} samples as ``efficiently" as possible from }$\L(\X')$. \\ 
\end{center}
\vskip .15in

Our view of ``efficient" is the same as that specified in the introduction in \cite{Duchon}.  
That is, we address the following practical problem:  how much memory and time are required to sample from $\L(\X')$?  
We will assume that the dominant cost of an algorithm is the number of calls to a random number generator, which produces i.i.d.~uniform random values in the interval $(0,1)$, each with a cost which is $O(1)$; we also assume that arithmetic is largely negligible, and that precision is fixed.  
Finally, the runtimes of our algorithms will themselves be random variables, most with unbounded ranges, and so we only consider those algorithms which terminate in finite time with probability 1. 
As a result, we use the \emph{expected} runtime of an algorithm for the measure of efficiency.

When event $E$ has positive probability, the conditional distribution is well-defined and one can aways sample from the distribution $\L(\X)$ repeatedly until a variate generated lies in the set $E$, although the number of times to restart the algorithm may be impractical.  
We refer to this type of algorithm as~\emph{hard rejection sampling}, since each outcome is rejected with probability $0$ or $1$.  
A more general approach is to sample from a related distribution~$\L(\Y)$, and reject each outcome with a probability in $[0,1]$, which depends on $\L(\Y)$ and the observed variate; we refer to this type of algorithm as~\emph{soft rejection sampling}, since it requires some auxiliary randomness and does not always outright reject certain outcomes.  

In the case when event $E$ has probability 0, we are not aware of the existence of any analogous \WTGL\ algorithms which can be applied generally.  We are aware of several methods tailored for particular applications, for example sampling from the surface of an $n$--sphere, see for example \cite{Ball, Diaconis}, or convex polytope sampling, see for example \cite{devroye, Feller2}, but nothing as general as \WTGL\ which applies as readily and ubiquitously. 

At this point we also remind the reader that we are not considering algorithms which generate samples from an approximate distribution in finite time, such as standard (forward) Markov chains, or the Boltzmann sampler, which considers samples of $\L(\X)$ as acceptable surrogates for $\L(\X')$.  
Also included in this category are sampling algorithms which replace the event $E$ by some event $E'$ with positive probability, with $E \subset E'$. 
There is one exact sampling Markov chain approach we are aware of, coupling from the past \cite{ProppWilson}, which typically requires some type of monotonicity in order to apply practically. 

\emph{Probabilistic divide-and-conquer} (PDC) is a method for exact sampling which divides a sample space into two parts, samples each part separately, and then pieces them back together to form an exact sample~\cite{PDC}. 
That is, we take as our sample space some set $\mathcal{X}$, and decompose it by sets $\A$ and $\B$ such that $\mathcal{X} = \A \times \B$; we refer to this decomposition as the \emph{PDC division}. 
Then, for some random variable $\X$ on $\mathcal{X}$ and event~$E$, we assume that $\X$ can be written as $\X = (A,B)$, with random variables $A \in \A$ and $B \in \B$ such that $A$ and $B$ are independent, and  $\L(\X\, |\, E) = \L((A,B)\, |\, E)$. 
One then samples from $\L(A\, |\, E)$, say observing variate $x$, and then samples from $\L(B\, |\, E, A=x)$, say observing variate $y$; the PDC Lemma~\cite[Lemma~2.1]{PDC} affirms that $(x,y)$ is indeed an exact sample from $\L(\X\, |\, E)$ when event~$E$ has positive probability.  

Our purpose in this paper is two-fold. 
First, we prove in Lemma~\ref{PDC lemma} a generalization to the PDC Lemma~\cite[Lemma~2.1]{PDC}, which contains sufficient conditions for when PDC can be applied even when event $E$ has probability 0. 
The second is to expand on the utility of a particular PDC parameterization, originally exploited in~\cite[Section~3.3]{PDC} for the random sampling of integer partitions, dubbed \emph{deterministic second half}, which can (and should) be exploited almost as ubiquitously as hard rejection sampling.  
The name comes from the fact that we shall assume that sets $\A$ and $\B$ in the PDC division are chosen so that for each $a \in \A$, there is exactly one $b_a \in \B$ such that $(a,b_a) \in E$.

The outline of the paper is as follows.  
In Section~\ref{section:disintegration}, we state conditions under which the conditional distribution~\eqref{abstract:distribution} exists and is unique up to almost sure equivalence of probability distributions. 
In Section~\ref{PDC}, we prove an extended PDC lemma which allows for certain events of probability $0$, and provide a general PDC algorithm for the random sampling of distributions of the form in~\eqref{abstract:distribution}, stated in terms of quantities which may not be explicitly or efficiently computable in all situations, but which does \emph{not} yet assume a deterministic second half PDC division. 
In Section~\ref{PDCDSH}, we specialize to the deterministic second half case, and present two widely applicable PDC algorithms, one for discrete random variables and one for continuous random variables, which are stated in terms of explicitly computable quantities. 
In Section~\ref{cost}, we provide an analysis of the costs associated with the deterministic second half algorithm. 
In Section~\ref{example}, we give an example which demonstrates that PDC is \emph{not} simply rejection sampling, and which also highlights the subtleties involved with conditioning on events of probability~0. 
In Section~\ref{continuous examples}, we describe several applications of \PDCTSH\ to joint distributions of continuous random variables with event $E$ having probability 0. 
Finally, in Section~\ref{combinatorial classes}, we demonstrate the effectiveness of \PDCTSH\ for the random sampling of certain combinatorial distributions.

\section{Conditioning as disintegration}
\label{section:disintegration}
This section takes its name from~\cite{disintegration}, where an elegant, rigorous treatment of conditional distributions is presented: quoting from page 289, (with references for Tjur~\cite{tjur1974conditional} and Winter~\cite{winter1979alternate})
\begin{quotation}
It has long bothered us (and other authors, such as Tjur, 1974 and Winter, 1979) that there should be such a wide gap between intuition and rigor in conditioning arguments.  We feel that, in many statistical problems, manipulation of the conditional probability distribution is the most intuitive way to proceed.  However, we mathematical statisticians are trained to treat such conditional distributions with great caution, being aware of the menagerie of nasty counterexamples -- such as the Borel paradox -- that warn one away from conditional distributions.  \ldots

By way of a small amount of theory and a collection of illustrative examples, in this paper we present a case that disintegrations are easy to manipulate and that they recapture some of the intuition lost by the more abstract approach, allowing guilt-free manipulation of conditional distributions. 
\end{quotation}
We shall endeavor to recount the essential aspects of the framework presented in~\cite{disintegration} so that it is apparent what conditions are sufficient for a probabilistic divide-and-conquer algorithm to be fashioned, and highly recommend a thorough reading of~\cite{disintegration} for a more complete picture with enlightening examples. 
There is also a very reasonable topological restriction on the state space, which we emphasize is likely to be satisfied in most applications, in particular those involving real-valued random variables. 

We start with the definition of a disintegration, which is essentially a sigma-finite measure which acts like the elementary notion of a conditional distribution. 
We now introduce some definitions using standard terminology. 
Let $(\mathcal{X}, \mathcal{F})$ and $(\mathcal{Y}, \mathcal{G})$ denote two measurable spaces (where we have adopted standard nomenclature from~\cite{folland2013real}), and suppose $T$ is a measurable map from $(\mathcal{X}, \mathcal{F})$ into $(\mathcal{Y}, \mathcal{G})$. 
Suppose $\lambda$ is a probability measure on $(\mathcal{X}, \mathcal{F})$. 
Borrowing the notation from~\cite{disintegration}, we write $\lambda\,f = \int f(x) \lambda(dx)$, and for events $A \subset \mathcal{F}$ we let $\lambda (fA) = \int f(x) \mathbbm{1}\{x \in A\} \lambda(dx)$. 
Similarly, we denote the image of measure $\lambda$ by $T$ as $T\lambda$, with $(T\lambda)B = \lambda\, (T^{-1}A)$ denoting the probability of event $B \subset \mathcal{B}$ whenever $TA = B$. 

Next, we recall the essential characteristics of a conditional probability measure. 
First, we assume $\P$ is a probability measure on $(\mathcal{X},\mathcal{F}$). 
We next assume that $T$ only takes values in some finite subset $\mathcal{R} \subset \mathcal{Y}$ such that $\P\{T = t\} > 0$ for all $t \in \mathcal{R}$. 
Then the measure $\P(\, \cdot\, |\, T = t)$ satisfies
\begin{itemize}
\item[(a)] $\displaystyle \P(A\, |\, T = t) = \frac{\P(A \cap \{T = t\})}{\P\{T = t\}}$, for $A \in \mathcal{F}$ and $t \in \mathcal{R}$; \\
\item[(b)] $\P(\, \cdot\, |\, T=t)$ is a probability measure on $(\mathcal{X}, \mathcal{F})$ for all $t \in \mathcal{R}$; \\
\item[(c)] $\P(A) = \sum_{t \in \mathcal{R}} \P\{T=t\} \P(A\, |\, T=t)$, for $A \in \mathcal{F}$.  \\
\end{itemize}

Next is the definition of a disintegration. 
Let $\lambda$ be a sigma-finite measure on $\mathcal{F}$ and $\mu$ be a sigma-finite measure on $\mathcal{G}$.  

\begin{definition}[\cite{disintegration}]\label{definition:disintegration}
We say that $\lambda$ has a disintegration $\{\lambda_t\}$ with respect to $T$ and $\mu$, or a $(T,\mu)$-disintegration, if, for each nonnegative measurable function $f$ on $\mathcal{X}$:  \\
\begin{itemize}
\item[(i)] $\lambda_t$ is a probability measure on $\mathcal{F}$ concentrated on $\{T = t\}$; that is, $\lambda_t\{T \neq t\} = 0$ for $\mu$-almost all $t$; \\
\item[(ii)] $t \mapsto \lambda_t(f)$ is measurable; \\
\item[(iii)] $\lambda(f) = \int_t \lambda_t(f) d\mu(t)$. \\
\end{itemize}
\end{definition}

We shall see shortly that whenever the disintegration $\{\lambda_t\}$ consists of probability measures then we have $\mu = T\lambda$. 
Next, let us briefly explain sufficient conditions under which such a disintegration exists and is unique; see Theorem~\ref{existence:theorem} below. 
It is sufficient that the set $\mathcal{G}$ is countably generated and contains all singleton sets, and that $\lambda$ is a Radon measure on a metric space. 
A \emph{Radon measure} is a Borel measure for which $\lambda(K) < \infty$ for each compact $K$, and for any Borel set $B$ we define $\lambda(B) = \sup_{K \subseteq B} \lambda(K)$, where the supremum is over compact sets $K$.  
As pointed out in~\cite{disintegration}, by~\cite[Theorem~1.4]{BillingsleyConvergence}, a finite Borel measure on a complete, separable metric space is Radon (also known as \emph{tight}). 
Thus, we can specialize to many common probability measures on~$\R^n$ without difficulty. 

\begin{theorem}[{\cite[Existence Theorem]{disintegration}}]\label{existence:theorem}
Let $\lambda$ be a sigma-finite Radon measure on a metric space $\mathcal{X}$ and let $T$ be a measurable map from $(\mathcal{X}, \mathcal{F})$ into $(\mathcal{Y}, \mathcal{G})$.  Let $\mu$ be a sigma-finite measure on $\mathcal{G}$ that dominates the image measure $T \lambda$.  If $\mathcal{G}$ is countably generated and contains all the singleton sets $\{t\}$, then $\lambda$ has a $(T,\mu)$-disintegration.  The $\lambda_t$ measures are uniquely determined up to an almost sure equivalence: if $\{\lambda_t^\ast\}$ is another $(T,\mu)$-disintegration then $\mu\{t \in \mathcal{Y} : \lambda_t \neq \lambda_t^\ast\} = 0$.
\end{theorem}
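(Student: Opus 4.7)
The plan is to construct $\{\lambda_t\}$ as a kind of Radon--Nikodym derivative of $\lambda$ along the fibers $T^{-1}\{t\}$, using the countably generated structure of $\mathcal{G}$ to avoid a continuum of exceptional $\mu$-null sets, and then to promote the resulting family of set functions to genuine Radon probability measures by exploiting tightness of $\lambda$. By standard sigma-finiteness maneuvers (decomposing $\mathcal{Y}$ into pieces of finite $\mu$-measure and $\mathcal{X}$ into $T$-compatible pieces of finite $\lambda$-measure), I first reduce to the case where both $\lambda$ and $\mu$ are finite. For each $A \in \mathcal{F}$, the map $B \mapsto \lambda(A \cap T^{-1}B)$ is then a finite measure on $\mathcal{G}$ absolutely continuous with respect to $\mu$ (since it is dominated by $T\lambda \ll \mu$), and thus admits a Radon--Nikodym derivative $g_A(t)$; the natural guess is $\lambda_t(A) := g_A(t)$.

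The difficulty is that countable additivity in $A$ only holds up to a $\mu$-null exceptional set that depends on $A$, and a priori we cannot union uncountably many such sets. To bypass this, I would carry out the construction in two stages. First, on a countable subalgebra $\mathcal{F}_0 \subset \mathcal{F}$ built from a countable base for the metric topology of $\mathcal{X}$ closed under finite Boolean operations, so that a single $\mu$-null set handles all the countably many additivity and monotonicity identities, yielding a finitely additive set function on $\mathcal{F}_0$ for every $t$ off that fixed null set. Second, and this is the main obstacle, upgrade this set function to a bona fide Borel probability measure on $\mathcal{F}$ concentrated on $\{T = t\}$. The Radon hypothesis is indispensable here: on a metric space, inner regularity by compact sets lets one verify countable additivity through a countable family of tests, so the tightness of $\lambda$ transfers (again off a single $\mu$-null set) to tightness of each candidate $\lambda_t$ on $\mathcal{F}_0$, after which Caratheodory extension yields a unique Radon extension to all of $\mathcal{F}$. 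Concentration on the single fiber $\{T = t\}$ is then secured by testing (iii) against indicators $\mathbbm{1}\{T \in B\}$ for $B$ in a countable subfamily of $\mathcal{G}$ separating points, which is precisely where the hypothesis that $\mathcal{G}$ contains all singletons intervenes.

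Conditions (ii) and (iii) of Definition~\ref{definition:disintegration} then fall out of the construction: measurability of $t \mapsto \lambda_t(f)$ follows from measurability of each $g_A$ by a monotone class argument as $A$ ranges over $\mathcal{F}_0$ and then over all of $\mathcal{F}$, and identity (iii) is the defining Radon--Nikodym relation for $g_A$, extended from indicators to nonnegative measurable $f$ by monotone convergence. For uniqueness, if $\{\lambda_t^\ast\}$ is another $(T,\mu)$-disintegration, then for each fixed $A \in \mathcal{F}_0$ both $\lambda_t(A)$ and $\lambda_t^\ast(A)$ are versions of the Radon--Nikodym derivative of $B \mapsto \lambda(A \cap T^{-1}B)$ with respect to $\mu$ and must agree $\mu$-almost everywhere; countability of $\mathcal{F}_0$ lets the countably many exceptional null sets be combined into a single $\mu$-null set $N$, off which the two Radon extensions coincide on $\mathcal{F}_0$ and hence, by uniqueness of the Radon extension, on all of $\mathcal{F}$.
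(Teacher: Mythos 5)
The paper does not actually prove this statement: it is imported verbatim from the cited reference on disintegration (Chang and Pollard), so there is no in-paper argument to compare against. Your sketch is, in essence, the standard proof from that source: Radon--Nikodym derivatives of $B \mapsto \lambda(A \cap T^{-1}B)$ taken on a countable algebra, a single exceptional $\mu$-null set collecting the countably many failures of additivity, tightness used to upgrade the resulting finitely additive set functions to premeasures, Carath\'eodory extension, and concentration on fibers via a countable point-separating subfamily of $\mathcal{G}$. Two points deserve attention. First, the theorem does not assume $\mathcal{X}$ separable, so ``a countable base for the metric topology'' is not available at the outset; you must first invoke the Radon hypothesis to restrict $\lambda$ to a sigma-compact (hence separable) subset of full measure and only then build $\mathcal{F}_0$ --- not a gap so much as the precise place where tightness enters a second time. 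Second, your construction yields $\lambda_t(\mathcal{X}) = d(T\lambda)/d\mu(t)$, so the $\lambda_t$ are probability measures only when $\mu = T\lambda$; this is consistent with Theorem~\ref{theorem:2}(iii)--(iv) and with the cited source (which requires only sigma-finite fiber measures), but not with the paper's transcription of Definition~\ref{definition:disintegration}(i), which demands probabilities --- as stated there, the existence claim for a general dominating $\mu$ would be false (take $\mu = 2\,T\lambda$). Your uniqueness argument is correct, though the last step is really a $\pi$--$\lambda$ argument (two finite Borel measures agreeing on a generating algebra agree on the generated sigma-algebra) rather than ``uniqueness of the Radon extension.''
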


The next two theorems now complete the picture by providing the conditions for which the disintegration consists of probability measures, and also when the densities exist. 

\begin{theorem}[\cite{disintegration}]\label{theorem:2}
Let $\lambda$ have a $(T,\mu)$-disintegration $\{\lambda_t\}$, with $\lambda$ and $\mu$ each sigma-finite. 
\begin{itemize}
\item[(i)] The image measure $T\lambda$ is absolutely continuous with respect to $\mu$, with density $\lambda_t \mathcal{X}$. 
\item[(ii)] The measures $\{\lambda_t\}$ are finite for $\mu$-almost all $t$ if and only if $T\lambda$ is sigma-finite. 
\item [(iii)] The measures $\{\lambda_t\}$ are probabilities for $\mu$-almost all $t$ if and only if $\mu = T\lambda$.
\item[(iv)] If $T\lambda$ is sigma-finite then $(T\lambda)\{\lambda_t \mathcal{X} = 0\} = 0$ and $(T\lambda)\{\lambda_t \mathcal{X} = \infty\} = 0$.  For $T\lambda$-almost all $t$, the measures 
\[ \tilde \lambda_t(\cdot) = \frac{\lambda_t(\cdot)}{\lambda_t\mathcal{X}}\left\{0 < \lambda_t \mathcal{X} < \infty\right\} \]
are probabilities that give a $(T, T\lambda)$-disintegration of $\lambda$. 
\end{itemize}
\end{theorem}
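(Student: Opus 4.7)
\medskip
\noindent\textbf{Proof proposal.} The plan is to extract everything from the defining integral identity in Definition~\ref{definition:disintegration}(iii), namely $\lambda(f) = \int \lambda_t(f)\,d\mu(t)$, combined with the concentration property $\lambda_t\{T\neq t\}=0$ $\mu$-a.e. Each of the four assertions follows from a judicious choice of test function $f$ and a short measure-theoretic argument.

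For part (i), I would take $f = \mathbbm{1}_{T^{-1}B}$ for an arbitrary $B\in\mathcal{G}$. By definition of the image measure, $(T\lambda)(B)=\lambda(T^{-1}B)=\int \lambda_t(T^{-1}B)\,d\mu(t)$. Because $\lambda_t$ is concentrated on $\{T=t\}$ for $\mu$-a.e.\ $t$, the integrand equals $\lambda_t(\mathcal{X})\mathbbm{1}_B(t)$ for $\mu$-a.e.\ $t$, so $(T\lambda)(B)=\int_B \lambda_t\mathcal{X}\,d\mu(t)$. This both proves $T\lambda\ll\mu$ and identifies $t\mapsto\lambda_t\mathcal{X}$ as the Radon--Nikodym derivative. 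Parts (iii) and one direction of (ii) then follow almost immediately: if $\lambda_t$ is a probability $\mu$-a.e., the density equals $1$ $\mu$-a.e., so $T\lambda=\mu$; conversely, when $\mu=T\lambda$ the density must be $1$ $\mu$-a.e., which forces $\lambda_t\mathcal{X}=1$ $\mu$-a.e.\ (and combined with $\lambda_t\geq 0$ gives probabilities).

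For part (ii), the forward direction is straightforward: if $\lambda_t\mathcal{X}<\infty$ $\mu$-a.e., write $C_n:=\{t:\lambda_t\mathcal{X}\leq n\}$, intersect with a sigma-finite exhaustion of $\mu$, and use (i) to bound $(T\lambda)(C_n\cap D_m)\leq n\,\mu(D_m)<\infty$; these sets cover $\mathcal{Y}$ up to a $T\lambda$-null set, so $T\lambda$ is sigma-finite. For the converse, decompose $\mathcal{Y}=\bigcup B_n$ with $(T\lambda)(B_n)<\infty$; then by (i) $\int_{B_n}\lambda_t\mathcal{X}\,d\mu<\infty$, and a nonnegative function with finite integral is finite almost everywhere, so $\lambda_t\mathcal{X}<\infty$ $\mu$-a.e.\ on each $B_n$, hence on $\mathcal{Y}$. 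The main subtlety I anticipate is bookkeeping on the reference measure: statements like ``$\mu$-a.e.'' versus ``$T\lambda$-a.e.'' look interchangeable but are not, and since $\mu$ only dominates $T\lambda$ one must take care on the set where $T\lambda$ vanishes.

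For part (iv), finiteness of $\lambda_t\mathcal{X}$ on a $T\lambda$-conull set follows from part (ii) plus the fact that $T\lambda\ll\mu$. To see $(T\lambda)\{\lambda_t\mathcal{X}=0\}=0$, use (i): the integral of $\lambda_t\mathcal{X}$ over this set is zero by construction. Thus for $T\lambda$-a.e.\ $t$ the measure $\tilde\lambda_t:=\lambda_t/\lambda_t\mathcal{X}$ is a well-defined probability concentrated on $\{T=t\}$; measurability of $t\mapsto\tilde\lambda_t(f)$ is inherited from the corresponding property of $\lambda_t(f)$ and of $\lambda_t\mathcal{X}$. The disintegration identity is then a one-line verification:
\[
\int \tilde\lambda_t(f)\,d(T\lambda)(t)=\int \frac{\lambda_t(f)}{\lambda_t\mathcal{X}}\,\lambda_t\mathcal{X}\,d\mu(t)=\int \lambda_t(f)\,d\mu(t)=\lambda(f),
\]
using part (i) to rewrite $d(T\lambda)=\lambda_t\mathcal{X}\,d\mu$ and Definition~\ref{definition:disintegration}(iii) for the last equality. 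The hardest conceptual point is confirming that the exceptional null sets (where $\lambda_t\mathcal{X}$ is $0$ or $\infty$) do not pollute the verification, but once (ii) and the density formula from (i) are in hand, restricting to the conull set $\{0<\lambda_t\mathcal{X}<\infty\}$ resolves this cleanly.
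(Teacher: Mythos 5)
Your argument is correct, and it is the standard proof of this result; note that the paper itself states this theorem as a quotation from \cite{disintegration} and supplies no proof, so there is nothing internal to compare against. The one point you flag as delicate in part (iv) does resolve as you hope: since $(T\lambda)\{\lambda_t\mathcal{X}=\infty\}=\int_{\{\lambda_t\mathcal{X}=\infty\}}\lambda_t\mathcal{X}\,d\mu=\infty\cdot\mu\{\lambda_t\mathcal{X}=\infty\}=0$, the set $\{\lambda_t\mathcal{X}=\infty\}$ is actually $\mu$-null (not merely $T\lambda$-null), and on $\{\lambda_t\mathcal{X}=0\}$ the integrand $\lambda_t(f)$ vanishes identically, so the exceptional sets contribute nothing to either side of the disintegration identity.
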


\begin{theorem}[\cite{disintegration}]\label{theorem:3}
Let $\lambda$ have a $(T,\mu)$-disintegration $\{\lambda_t\}$, and let $\rho$ be absolutely continuous with respect to $\lambda$ with a finite density $r(x)$, with each of $\lambda$, $\mu$, and $\rho$ sigma-finite. 
\begin{itemize}
\item[(i)] The measure $\rho$ has a $(T,\mu)$-disintegration $\{\rho_t\}$ where each $\rho_t$ is dominated by the corresponding $\lambda_t$, with density $r(x)$.  
\item[(ii)] The image measure $T \rho$ is absolutely continuous with respect to $\mu$, with density $\lambda_t r$. 
\item[(iii)] The  measures $\{\rho_t\}$ are finite for $\mu$-almost all $t$ if and only if $T\rho$ is sigma-finite. 
\item [(iv)] The measures $\{\rho_t\}$ are probabilities for $\mu$-almost all $t$ if and only if $\mu = T\rho$.
\item[(v)] If $T\rho$ is sigma-finite then $(T\rho)\{\lambda_t r = 0\} = 0$ and $(T\rho)\{\lambda_t r = \infty\} = 0$.  For $T\rho$-almost all $t$, the measures defined by 
\begin{equation}\label{density} \tilde \rho_t(f) = \frac{\lambda_t(f\, r)}{\lambda_t(r) }\left\{0 < \lambda_t(r) < \infty\right\} \end{equation}
are probabilities that give a $(T, T\rho)$-disintegration of $\rho$. 
\end{itemize}
\end{theorem}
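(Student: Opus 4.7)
The plan is to define the candidate disintegration by $\rho_t(f) := \lambda_t(f\,r)$ for each nonnegative measurable $f$, and then read off every one of (i)--(v) by pushing the density $r$ through the $(T,\mu)$-disintegration of $\lambda$. The one technical tool I would reuse throughout is that each $\lambda_t$ is concentrated on $\{T=t\}$, so that the indicator $\mathbbm{1}_{T^{-1}B}$ may be replaced by the constant $\mathbbm{1}_B(t)$ under $\lambda_t$.

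For part (i), I would verify the three clauses of Definition~\ref{definition:disintegration} for $\{\rho_t\}$: concentration of $\rho_t$ on $\{T=t\}$ is inherited from $\lambda_t$ since $\rho_t \ll \lambda_t$; measurability of $t \mapsto \rho_t(f) = \lambda_t(f\,r)$ is clause (ii) of the disintegration of $\lambda$ applied to the nonnegative measurable function $f r$; and the reconstruction identity $\rho(f) = \lambda(f\,r) = \int \lambda_t(f\,r)\, d\mu(t) = \int \rho_t(f)\, d\mu(t)$ follows by combining $\rho \ll \lambda$ with density $r$ and clause (iii) of the disintegration of $\lambda$ applied to $f r$. The domination statement with density $r$ is built into the construction. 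For (ii), I would specialize the reconstruction identity to $f = \mathbbm{1}_{T^{-1} B}$; the concentration of $\lambda_t$ reduces the right-hand side to $\int_B \lambda_t(r)\, d\mu(t)$, exhibiting $\lambda_t(r)$ as the density of $T\rho$ with respect to $\mu$.

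Parts (iii) and (iv) then follow cleanly from (ii): since $\rho_t(\mathcal{X}) = \lambda_t(r)$ is the $\mu$-density of $T\rho$, it is finite $\mu$-a.e.\ iff $T\rho$ is sigma-finite (invoking sigma-finiteness of $\mu$ to break up sub-level sets), and equals $1$ $\mu$-a.e.\ iff $T\rho = \mu$. For part (v), assuming $T\rho$ is sigma-finite, (iii) gives $\lambda_t(r) < \infty$ for $\mu$-a.a.\ $t$, while (ii) yields $T\rho\{\lambda_t(r) = 0\} = \int \mathbbm{1}\{\lambda_t(r)=0\}\,\lambda_t(r)\,d\mu = 0$; together these justify dividing by $\lambda_t(r)$ on a set of full $T\rho$-measure, so that each $\tilde\rho_t$ defined by \eqref{density} is a probability measure. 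The main obstacle --- and the only step that requires genuine care --- is the final verification that $\{\tilde\rho_t\}$ is a $(T, T\rho)$-disintegration: integrating $\tilde\rho_t(f)$ against $T\rho$ using the density $\lambda_t(r)$ from (ii) cancels the normalization and recovers $\int \lambda_t(f\,r)\, d\mu = \lambda(f\,r) = \rho(f)$, which is exactly the reconstruction identity for $\rho$ relative to the new reference measure $T\rho$. The measurability and concentration clauses for $\tilde\rho_t$ are inherited from the unnormalized $\rho_t$ on the set where $0 < \lambda_t(r) < \infty$, and trivial off of it.
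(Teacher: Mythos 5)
This theorem is imported verbatim from Chang and Pollard's paper on disintegration and is stated in the present paper without proof, so there is no in-paper argument to compare against; judged on its own, your proof is correct and is essentially the standard argument from that reference: set $\rho_t(f)=\lambda_t(fr)$, verify the three clauses of Definition~\ref{definition:disintegration} by pushing $r$ through the disintegration of $\lambda$, read off the density $\lambda_t r$ of $T\rho$ by specializing to $f=\mathbbm{1}_{T^{-1}B}$ and using concentration, and normalize on $\{0<\lambda_t(r)<\infty\}$. Two small points deserve explicit mention. First, in the reconstruction identity for part (v), the set $\{\lambda_t(r)=0\}$ can have positive $\mu$-measure, so after cancelling the normalization you must still check that it contributes nothing to $\int \lambda_t(fr)\,d\mu$; this holds because $\lambda_t(r)=0$ forces $r=0$ $\lambda_t$-a.e.\ and hence $\lambda_t(fr)=0$ for nonnegative $f$, a fact your write-up uses implicitly but does not state. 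Second, Definition~\ref{definition:disintegration} as transcribed in this paper requires each $\lambda_t$ to be a probability measure, whereas your $\rho_t=r\cdot\lambda_t$ is in general only sigma-finite; this is a quirk of the paper's transcription (the original definition requires only sigma-finite measures concentrated on the fibres, which is what parts (iii) and (iv) presuppose), not a defect of your argument.
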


Again quoting from~\cite{disintegration}:
\begin{quotation}
The simple formula~\eqref{density} is the general version of the familiar method for calculating conditional densities as a ratio of joint density to marginal density.  It is more useful than the familiar formula because it does not require the conditioning variable to be a coordinate projection on a Euclidean space with Lebesgue measure playing the role of $\lambda$.  
\end{quotation}

\section{A PDC lemma with disintegrations}
\label{PDC}

We start with a sample space $\mathcal{X}$ with Borel sigma-algebra $\mathcal{F}$; random variable $\X \in \mathcal{F}$ which induces a probability measure $\P$ on $\mathcal{X}$; $\mathcal{F}$-measurable event~$E$; and we wish to sample from the distribution in~\eqref{abstract:distribution}. 
We write the sample space as $\mathcal{X} = \A \times \B$, with random variables $A \in \A$ and $B \in \B$ defined such that they are independent, and~\eqref{abstract:distribution} can be written as $\L( (A,B)\, |\, (A,B) \in E)$. 
The most generic approach to sampling from the distribution~\eqref{abstract:distribution} is \WTGL, 
given by Algorithm~\ref{WTGL procedure}.  The generic PDC random sampling algorithm is given in Algorithm~\ref{PDC procedure}.
Note that, when $\P(\X \in E) =0$, hard rejection sampling does not terminate in finite time with probability 1. 

\begin{algorithm}{\rm
\begin{algorithmic}
\State 1.  Generate sample from $\L(A)$, call it $a$.
\State 2.  Generate sample from $\L(B)$, call it $b$.
\State 3.  Check if $(a,b) \in E$; if so, return $(a,b)$, otherwise restart.
\end{algorithmic}}
\caption{\WTGLC\ of $\L(\, (A,B)\, |\, (A,B) \in E\, )$}
\label{WTGL procedure}
\end{algorithm}

\begin{algorithm}{\rm
\begin{algorithmic}
\State 1. Generate sample from $\L(A\, |\, E)$, call it $x$.
\State 2. Generate sample from $\L(B\, |\, E, A=x)$ call it $y$.
\State 3. Return $(x,y)$.
\end{algorithmic}}
\caption{Probabilistic Divide-and-Conquer sampling of $\L(\, (A,B)\, |\, (A,B) \in E\, )$} 
\label{PDC procedure}
\end{algorithm}

The PDC Lemma~\cite[Lemma 2.2]{PDC} affirms that the resulting pair $(x,y)$ from Algorithm~\ref{PDC procedure} is an exact sample from $\L(\X')$ when $\P(\X \in E)>0$.  
We now generalize this lemma to include certain events for which $\P(\X \in E) = 0$. 
Recall the notation from the previous section: 
Let $(\mathcal{X}, \mathcal{F})$ and $(\mathcal{Y}, \mathcal{G})$ denote two measurable spaces, and suppose $T$ is a measurable map from $(\mathcal{X}, \mathcal{F})$ into $(\mathcal{Y}, \mathcal{G})$. 

\begin{lemma}\label{PDC lemma}
Suppose 
\begin{itemize}
\item[(i)] $\P$ is a given probability Radon measure on $(\mathcal{X}, \mathcal{F})$;  
\item[(ii)] the image measure $T\ts \P$ is a probability measure on $(\mathcal{Y}, \mathcal{G})$, where $\mathcal{G}$ is countably generated and contains all the singleton sets $\{t\}$. 
\end{itemize}
Then $\P$ has disintegration probability measure $\P_t := \P(\, \cdot\, |\, T = t)$, which is unique up to almost sure equivalence of probability measures, and for events $E$ of the form $E = \{T = t\}$, we have $\L(\X')$ is well-defined and unique on events of positive $\P_t$ measure.  

Furthermore, assume 
\begin{itemize}
\item[(a)] there are metric spaces $\A$ and $\B$ such that $\mathcal{X} = \A \times \B$;
\item[(b)] there are probability measures $\P_\A := \pi_\A \P$ and $\P_\B := \pi_\B \P$, such that $\P = \P_\A \times \P_\B$, corresponding to measurable random variables $A \in\A$ and $B\in\B$ which are independent with distributions also denoted by $\L(A)$ and $\L(B)$, respectively;
\item[(c)] for each $a \in \mathcal{A}$, the map $T_a(\cdot) := T(a, \cdot)$ is measurable (i.e., $T_a$ is the map $T$ holding the coordinate $\A$ fixed), and we let $t_a$ denote the set of values such that $\{T=t, A=a\} = \{T_a \in t_a\}$ holds. 
\end{itemize}
Then the following distributions exist and are unique up to almost sure equivalence:
\[\qquad  \L(X) := \L( A\, |\, T = t), \qquad \L(Y\, |\, X=x) := \L(B\, |\, T = t, A = a) = \L( B\, |\, T_a(B) \in t_a); \]
Finally, we have $\L(X,Y) = \L(\X')$. 
\end{lemma}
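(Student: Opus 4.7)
The plan is to assemble the three conclusions in order, each time leaning on the existence-and-uniqueness machinery of Section~\ref{section:disintegration}.

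First, for the existence of $\P_t$ and hence of $\L(\X')$, I apply Theorem~\ref{existence:theorem} with the probability Radon measure $\P$, the measurable map $T$, and the dominating measure $\mu := T\ts \P$. Hypotheses (i)--(ii) of the lemma supply exactly the hypotheses of that theorem (Radonness, countable generation, and singletons), and $T\P$ trivially dominates itself. This yields a $(T,T\P)$-disintegration $\{\P_t\}$ of $\P$, unique up to $T\P$-a.s.\ equivalence. Because $\mu = T\P$, Theorem~\ref{theorem:2}(iii) makes each $\P_t$ a genuine probability measure for $T\P$-a.e.\ $t$, and setting $\P_t = \P(\,\cdot\,|\,T=t)$ gives the well-definedness of $\L(\X')$ on events of positive $\P_t$-measure.

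Second, I obtain the two PDC pieces. The marginal $\L(X) := \pi_\A \P_t$ is the pushforward of a probability measure by the continuous projection, hence is itself a probability measure on the metric space $\A$, uniquely determined because $\P_t$ is. For $\L(Y\,|\,X=a)$, observe that $\P_\B := \pi_\B \P$ is Radon on $\B$ (pushforward of a Radon measure under a continuous map), that $T_a$ is measurable by (c), and that $\mathcal{G}$ is countably generated with singletons. A second application of Theorem~\ref{existence:theorem} to $(\P_\B,\,T_a,\,T_a \P_\B)$ therefore furnishes a disintegration $\{\P_\B(\,\cdot\,|\,T_a = s)\}$ consisting of probabilities, and the identification $\L(B\,|\,T=t,\,A=a) = \L(B\,|\,T_a(B)\in t_a)$ follows directly from (c) together with the independence of $A$ and $B$ under $\P$.

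Third and most subtly, I must verify $\L(X,Y) = \L(\X')$. My strategy is to exhibit the product
\[
\nu_t(f) \;:=\; \int_{\A}\!\!\int_{\B} f(a,b)\, d\L(Y\,|\,X=a)(b)\, d\L(X)(a)
\]
as a candidate $(T, T\P)$-disintegration of $\P$ and invoke the uniqueness clause of Theorem~\ref{existence:theorem} to conclude $\nu_t = \P_t$ for $T\P$-a.e.\ $t$. This requires checking the three items of Definition~\ref{definition:disintegration}: concentration of $\nu_t$ on $\{T=t\}$ (immediate from (c) and the support of $\L(Y\,|\,X=a)$); measurability of $t \mapsto \nu_t(f)$; and the integral identity $\P(f) = \int \nu_t(f)\, d(T\P)(t)$. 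The last is the main obstacle: it requires chaining the product decomposition $\P = \P_\A \times \P_\B$, Fubini, and the disintegration formula for $\P_\B$ against $T_a$, with the measurability in $a$ of the maps $a \mapsto \P_\B(\,\cdot\,|\,T_a = s)$ verified along the way via Definition~\ref{definition:disintegration}(ii). Once that identity is in hand, uniqueness of the disintegration delivers $\L(X,Y) = \L(\X')$.
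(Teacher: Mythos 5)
Your proposal is correct in its overall architecture and agrees with the paper for the first two conclusions, but it closes the final identity $\L(X,Y)=\L(\X')$ by a genuinely different route, so a comparison is worthwhile. For existence, the paper also invokes the disintegration machinery (it cites Theorem~\ref{theorem:2} where you cite Theorem~\ref{existence:theorem} plus Theorem~\ref{theorem:2}(iii); this is the same content), and it likewise obtains $\L(X)$ by projecting onto $\A$. For the conditional piece, the paper first forms $\L(B\,|\,T=t)$ by projecting onto $\B$ and then disintegrates \emph{that} measure under $T_a$, whereas you disintegrate the marginal $\P_\B$ directly under $T_a$ and then use independence and hypothesis (c) to identify the result with $\L(B\,|\,T=t,A=a)$; your route is arguably the more natural one, since it produces exactly the right-hand side $\L(B\,|\,T_a(B)\in t_a)$ appearing in the lemma's displayed identity. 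The real divergence is the last step: the paper proves $\L(X,Y)=\L(\X')$ by a direct chain of integral identities, applying property (iii) of Definition~\ref{definition:disintegration} once to $\P$ and a second time to $\P_t$ (the second application being justified by reference to an example in the disintegration paper), while you instead exhibit the iterated integral $\nu_t$ as a candidate $(T,T\P)$-disintegration of $\P$ and appeal to the uniqueness clause of Theorem~\ref{existence:theorem}. The uniqueness route buys a cleaner logical structure --- it isolates exactly what must be checked (concentration, measurability in $t$, and the integral identity) rather than asserting that $\P_t$ itself factors as the iterated integral, which is essentially the conclusion being proved --- at the cost of having to verify those three axioms. Be aware that the step you rightly flag as ``the main obstacle,'' namely the identity $\P(f)=\int\nu_t(f)\,d(T\P)(t)$ together with the measurability in $a$ of $a\mapsto\P_\B(\,\cdot\,|\,T_a=s)$, is deferred rather than carried out in your plan; the paper is equally terse at the corresponding point, so this is not a gap relative to the paper's standard, but it is where the remaining work lives if you were to write the argument out in full.
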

\begin{proof}
First, we demonstrate that all distributions exist and are unique up to almost sure equivalence. 
That $\P$ has disintegration probability measure $\P_t$ is a consequence of Theorem~\ref{theorem:2}, which also implies that $\L(\X')$ is well-defined and unique up to almost sure equivalence. 
By projecting $\X$ onto $\A$, we conclude the same holds for $\L(X)$ (see for example~\cite[Example~2]{disintegration}), and denote this measure by $\P_X$. 
By the same argument, $\L(B\, |\, T=t)$ also exists and is unique up to almost sure equivalence. 
Then, under the assumption that $T_a$ is measurable for each $a \in \A$, there exists a disintegration probability measure for $\L(B\, |\, T=t)$ under map $T_a$, i.e., $\L(B\, |\, T=t, A = x)$ for each $x \in A$, which is precisely $\L(Y\, |\, X=x)$; we denote this measure by $\P_{Y,x}$. 

Next, we show $\L(X,Y) = \L(\X')$.  
For all nonnegative measurable $f : \mathcal{A} \times \mathcal{B} \to \R$, by property (iii) of Definition~\ref{definition:disintegration}, applied once to $\P$ and a second time to $\P_t$ (see also~\cite[Example~4]{disintegration}), we have 
\begin{align*}
 \int f(A,B)\, (\P_\A \times \P_\B)(dA\times dB) & = \int_t\ \left(\int f(A,B)\ \P_t(dA\times dB) \right) (T\P)(dt) =  \\
               & = \int_t\ \left(\int \left(\int f(A,B)\ \P_{Y,x}(dB) \right) \P_X(dA) \right) (T\P)(dt) =  \\
               & = \int_t\ \left(\int f(X,Y)\ (\P_X\times \P_Y)(dX\times dY) \right) (T\P)(dt).
\end{align*}
Then, for all bounded measurable $g:  \mathcal{A} \times \mathcal{B} \to \R$, we separate $g$ into its positive and negative parts and apply the same argument, which completes the proof.
\end{proof}

Before we specialize to a more concrete setting, let us first introduce and extend a particularly fruitful PDC algorithm which uses soft rejection sampling, see~\cite[Algoirthm~3]{PDC}. 
Recall that soft rejection sampling samples a related distribution $\L(\Y)$, and introduces an auxiliary random variable $U$, uniform over the interval $(0,1)$, which is used to reject samples of $\L(\Y)$ so that they appear in their correct proportion in $\L(\X')$.  
In our application, we will apply soft rejection sampling to $\L(A\, |\, T=t)$ using samples from $\L(A)$.  
We define the function~$q(a)$, $a \in \A$,  as the correct proportion for using soft rejection sampling in this manner. 

We then define the \emph{rejection function} as 
\begin{equation}\label{eqt} s(a) := \frac{q(a)}{\sup_{\ell\in\A} q(\ell)}, \qquad a \in \A. \end{equation}
(In the event that $q(a) = 0$ for all $a\in \A$, we assign $s(a) = 0$.)
In other words, we will sample from the distribution $\L( (A,U)\, |\, U < s(a) )$, where $s : \A \to [0,1]$ is a measurable function, and with $s$ chosen as in Equation~\eqref{eqt}, the first coordinate has distribution $\L(A\, |\, T=t)$. 
Note, however, that we need some additional assumptions so that Algorithm~\ref{PDC procedure von Neumann} below terminates in finite time with probability 1:
\begin{equation}
\tag{A1}\label{A1} 0<\sup_{\ell \in \A} q(\ell) < \infty; 
\end{equation}
\begin{equation}
\tag{A2}\label{A2} \P_\A\{a \in \A : q(a) > 0\} > 0.
\end{equation}

\begin{algorithm}[H]{\rm
\begin{algorithmic}
\State 0.  Assume the conditions of Lemma~\ref{PDC lemma}, and in addition assume~\eqref{A1} and~\eqref{A2}. 
\State 1. Generate sample from $\L(A),$ call it $a$.
\State 2. Reject $a$ with probability $1-s(a)$, where $s(a)$ is given in~\eqref{eqt}; 
 otherwise, restart. 
\State 3. Generate sample from $\L(B\, |\, (a,B) \in E),$ call it $y$.
\State 4. Return $(a,y)$.
\end{algorithmic}}
\caption{PDC sampling from $\L( (A,B)\, |\, (A,B)\in E)$ using soft rejection sampling}
\label{PDC procedure von Neumann}
\end{algorithm}

\begin{remark}{\rm
In the event that $\sup_{\ell \in \A} q(\ell)$ is not practical to compute efficiently, we may replace this quantity in Equation~\eqref{eqt} with \emph{any} upper bound $C$ satisfying $\sup_{\ell \in \A} q(\ell)\leq C<\infty$, and Algorithm~\ref{PDC procedure von Neumann} is still an exact sampling algorithm.  Of course, as is well known (see for example~\cite{Rejection}), the efficiency of the algorithm diminishes the larger $C$ is taken, in the sense that the expected number of rejections before we accept a sample is multiplied by the corresponding quotient $C / \sup_{\ell \in \A} q(\ell)$, and so it is optimal to take $C= \sup_{\ell \in \A} q(\ell)$. 
}\end{remark}

Of course, there is still the matter of how to sample from $\L(B\, |\, (a,B) \in E)$ for each $a \in \A$. 
An often optimal divide-and-conquer strategy fashions sets $\A$ and $\B$ so that $\L(B\, |\, (a,B) \in E)$ is a smaller version of the original sampling problem $\L(\X\, |\, \X \in E)$, split roughly in half, and recursively repeats until a simple base case is reached; this is referred to as self-similar, recursive PDC in~\cite[Section~3.5]{PDC}, and it was used to yield an asymptotically efficient sampling algorithm for the random sampling of integer partitions. 
The fashioning of a self-similar PDC algorithm, however, requires the efficient computation of the rejection function $s(a)$, which can be computed efficiently in the case of integer partitions, see Remark~\ref{self-similar}; in general, however, the efficient computation of $s(a)$ is not a trivial matter, which is why we specialize to the more practical deterministic second half setting.

\section{PDC with deterministic second half}
\label{PDCDSH}
The efficient computability of $s(a)$ in Algorithm~\ref{PDC procedure von Neumann} is a vital consideration when fashioning a PDC division. 
It is with this consideration in mind that we revisit a PDC division first introduced in~\cite[Section~3.3]{PDC} called deterministic second half.
For each $a\in \A$, define the set $E_a := \{b \in \B: (a,B) \in E\}$ as the set of all $b \in \B$ which can be paired with a given $a\in \A$ and satisfy $(a,b) \in E$. 
Suppose $\A$ and $\B$ are chosen so that for each $a\in \A$, we have $|E_a| = 1$. 
This type of PDC division was shown to provide surprisingly large speedups to hard rejection sampling for many interesting examples in~\cite[Section~3.3]{PDC}, while keeping $s(a)$ practical to compute. 
We now generalize this approach.  

In addition to the assumptions in Lemma~\ref{PDC lemma}, we now add assumption~\eqref{DSH}, which makes the distribution $\L(B\,|\,(a,B)\in E)$ trivial; 
that is, the second stage of the algorithm is completed by a uniquely determined value. 
\begin{equation}\label{DSH}
\tag{DSH}
\mbox{For each $a \in \A$, \quad $|E_{a}| = |\{b_a\}| = 1$.}
\end{equation}
To reiterate, the assumption~\eqref{DSH} is an abbreviation for \emph{\underline{d}eterministic \underline{s}econd \underline{h}alf}, and is not required for PDC in general, nor for Lemma~\ref{PDC lemma}. 
In terms of Algorithm~\ref{PDC procedure von Neumann} it is provably faster than hard rejection sampling (with respect to a costing scheme which we make precise in Section~\ref{cost}), and in many cases of interest the rejection function is explicit and efficient to compute. 

At this point we also specialize to the parameterization in~\eqref{distribution}, since it is indicative of many interesting examples. 
That is, we assume a sample space $\mathcal{X} = \R^n$, with $\X = (X_1, X_2, \ldots, X_n)$ a joint distribution of real-valued \emph{independent} random variables, where each $X_i$ has a corresponding marginal distribution $\L(X_i)$, sometimes written as $\P_{X_i}$. 
In addition, we assume that $T \equiv T(X_1, \ldots, X_n)$ is some measurable function which satisfies the conditions of Lemma~\ref{PDC lemma}.  

\begin{example}\label{convolution:example}
Suppose $X_1, X_2, \ldots, X_n$ are discrete random variables. Let $T = \sum_{i=1}^n X_i$, so that $T\ts \P = \P_{X_1} \ast \P_{X_2} \ast \cdots \ast \P_{X_n}$ is the convolution of measures. 
Then for any fixed $t\in \R$ in the range of $T$, taking~$E = \{T = t\}$, we have that the distribution~\eqref{distribution} is well-defined by Theorem~\ref{existence:theorem}. 
Since $t$ is uniquely determined given any $n-1$ of the variables $X_1, \ldots, X_n$, a valid PDC deterministic second half approach would be to select an index $i$, say, e.g., $i=1$, and let $\A = \R^{n-1}$, $\B = \R$, with random variables $A = (X_2, \ldots, X_n)$ and $B = X_1$.  
Applying Algorithm~\ref{PDC procedure von Neumann}, the rejection function is given by 
\[ s((y_2, \ldots, y_{n})) = \frac{\P\left(X_1 =  t-\sum_{i=2}^{n} y_i\right)}{\sup_{\ell}\P(X_1 = \ell)}, \qquad (y_2, \ldots, y_{n}) \in \A. \]
\end{example}

\begin{example}\label{convolution:example:continuous}
Suppose in Example~\ref{convolution:example} we instead take $X_1, X_2, \ldots, X_n$ to be continuous random variables with a density, say we denote $f_{X_1}$ for the density of $X_1$. 
Applying Algorithm~\ref{PDC procedure von Neumann}, the rejection function is then given by 
\[ s((y_2, \ldots, y_{n})) = \frac{f_{X_1}(t-\sum_{i=2}^{n} y_i)}{\sup_{\ell}f_{X_1}(\ell)}, \qquad (y_2, \ldots, y_{n}) \in \A. \]
\end{example}

Examples~\ref{convolution:example} and~\ref{convolution:example:continuous} form the foundation of our intuition, but the form of $T$ is needlessly specific, and so we conclude the theoretical treatment of PDC deterministic second half with a generalization below that we have found both concrete and adequate for almost all applications encountered.  Even though, as mentioned in the quotation at the end of Section~\ref{section:disintegration}, we are not restricted to coordinate projections and Euclidean spaces, many natural PDC divisions are in fact coordinate projections on $\R^k$, and so we have endeavored to make this final treatment the most widely accessible and applicable. 

To simplify notation, when the values of the parameters are understood from context we adopt the conventions $\X' \equiv \X_n'$, $E \equiv E_n$, $\X \equiv \X_n \equiv (X_1, \ldots, X_n)$, $[n] = \{1,\ldots, n\}$.  
In order to describe the class of divisions, we now state the following definitions.  

\begin{definition}\label{main:definition}
Let $I = \{i_1, i_2, \ldots \} \subset [n]$ denote some subset of indices, with $i := |I|$. 
\begin{enumerate}
\item Let $X_I := \pi_I(\X) = (X_i)_{i\in I}$ denote the $\R^{i}$--valued projection;
\item let $\XI := \pi_{[n]\setminus I}(\X)$ denote the $\R^{n-i}$--valued projection;
\item let $\sigma_I : \R^{n-i} \times \R^{i} \to \R^n$ denote the (unique) rearrangement of the combined set of elements in $x$ and $y$ in the following manner: for $x = (x_1, \ldots, x_{n-i}) \in \R^{n-i}$ and $y = (y_1, \ldots, y_{i} \in \R^{i}$, $z = \sigma_I(x,y)$ is the vector for which $z_{i_j} = y_j$ for $j=1,\ldots, i$, and which keeps the original order of $x_1, \ldots, x_{n-i}$ in $z$, i.e., for all indices $1 \leq j_1 < j_2 \leq n-i$ for which  $x_{j_1} = z_{\ell_1}$ and $x_{j_2}= z_{\ell_2}$, we have $1 \leq \ell_1 < \ell_2 \leq n$.  In other words, $\sigma_I$ is defined so that we have $\{X \in E\} = \{\sigma_I( \XI, X_I ) \in E\}$;
\item define the \emph{$I$-completable-set} of $E_n$ as 
\[\EI :=  \pi_{[n]\setminus I}^{-1}(E_n)  =  \{ x\in \R^{n-i} : \exists y \in \R^{i} \mbox{ such that } \sigma_I(x,y) \in E_n \};
\]
\item define the \emph{$I$-section of $E_n$ given $\xI\in \EI$} as 
\[E_I \equiv E_{I|\xI} :=  \pi_I^{-1}(E_n\, |\,\xI)  =  \{ y\in \R^{i} : \sigma_I(\xI,y) \in E_n \};
\]
\item define $T_I \equiv T_I(X_I | \xI) := T\vert_{E_{I|\xI}}(\sigma_I(\xI, X_I)). $ 
\end{enumerate}
\end{definition}

To aid in the presentation of our algorithms, we also define the `cemetery' state $\Delta$ (see, e.g.,~\cite[Section~3.1]{DurrettBook}) to be such that $S \Delta = \Delta$ for all transformations $S$, with $\P(X = \Delta) = 0$ and the density function of $X$, if it exists, evaluated at $\Delta$ equals 0, for all random variables $X$. 
Under Assumption~\eqref{DSH}, for each $\xI \in \EI$, we define $\yI \equiv \yI(\xI) \in E_I$ to be the unique completion such that $\sigma_I(\xI,y_I) \in E$, and for each $\xI \notin \EI$, we define $y_I(\xI) = \Delta$. 
In addition, the random variable $T_I$ is one-to-one on $E_I$, and so we define $t_I \equiv t_I(\xI) := T_I\, \yI$ as the unique image of the point $\yI$ under map~$T_I$. 
\emph{We shall use the above definitions of $y_I$ and $t_I$ in all subsequent analysis and examples whenever assumption~\eqref{DSH} is in effect.}

We now present two concrete applications of PDC with deterministic second half we believe are of most use in practical applications: one for discrete random variables and the other for continuous random variables. 
See Section~\ref{example} for an application which uses Algorithm~\ref{PDC procedure} directly. 
For continuous random variables, we now utilize~Theorem~\ref{theorem:3}, which implies the existence of the density of a disintegration under conditions which will be satisfied for our application. 
We will denote the density of a random variable $X$ by $f_X$.  
 In what follows, $U$ will denote a uniform random variable between $0$ and $1$, independent of all other random variables, and $u$ an observed variate from this distribution.

\begin{algorithm}
\caption{PDC with deterministic second half for discrete random variables} 
\begin{algorithmic}
\State {\bf Input:} 
\begin{itemize}
\item[] Distributions $\L(X_1), \ldots, \L(X_n)$;
\item[] measurable function $T$, $t \in \mbox{range}(T)$; 
\item[] index set~$I \subset \{1, \ldots, n\}$. 
\end{itemize}
\State {\bf Output:} 
\begin{itemize}
\item[] A sample from $\L( (X_1, \ldots, X_n) | T=t).$ 
\end{itemize}
\State \assume 
\begin{itemize}
\item[] \eqref{A1}, \eqref{A2}, \eqref{DSH}, and $X_1, \ldots, X_n, T$ are discrete random variables.
\end{itemize} \\
\State Sample from $\L(\XI)$, denote the observation by $\xI$.
\If {$\xI \in \EI$ \mbox{ and } $u < \frac{\P\left(X_I = \yI\right)}{\max_\ell \P(X_I = \ell)}$ }
		\State \return $\sigma_I(\xI,\yI)$
\Else
\State \restart
\EndIf \\
\end{algorithmic} \label{PDC discrete}
\end{algorithm}

\begin{algorithm}
\caption{PDC with deterministic second half for continuous random variables}
\begin{algorithmic}
\State {\bf Input:} 
\begin{itemize}
\item[] Distributions $\L(X_1), \ldots, \L(X_n)$;
\item[] measurable function $T$, $t \in \mbox{range}(T)$; 
\item[] index set~$I \subset \{1, \ldots, n\}$. 
\end{itemize}
\State {\bf Output:} 
\begin{itemize}
\item[] A sample from $\L( (X_1, \ldots, X_n) | T=t).$ 
\end{itemize}
\State \assume 
\begin{itemize}
\item[] \eqref{A1}, \eqref{A2}, \eqref{DSH}, and $X_1, \ldots, X_n, T$ are continuous random variables. 
\end{itemize} \\
\State Sample from $\L(\XI)$, denote the observation by $\xI$.
\If {$\xI \in \EI$ \mbox{ and } $u < \frac{f_{T_I}\left(\tI\right)}{\sup_\ell f_{T_I}(\ell)}$ }
		\State \return $(\xI,\yI)$
\Else
\State \restart
\EndIf
\end{algorithmic} \label{PDC continuous}
\end{algorithm}

Each of these algorithms follows by Lemma~\ref{PDC lemma} and Algorithm~\ref{PDC procedure von Neumann}. 
However, we shall write out explicitly the steps with which to derive the rejection function in order to draw attention to a key difference between the discrete and continuous versions; see also Remark~\ref{key:difference} below.   
Note that while our rejection probability in Algorithm~\ref{PDC continuous} is stated in terms of the distribution $\L(T_I\, |\, \XI=\xI)$, we return the value $\yI$, which is the corresponding value in the range of $X_I$.  

\begin{theorem}\label{discrete theorem}
Algorithm~\ref{PDC discrete} samples from the distribution given in Equation~\eqref{distribution}.
\end{theorem}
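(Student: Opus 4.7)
The plan is to cast Algorithm~\ref{PDC discrete} as an instance of Algorithm~\ref{PDC procedure von Neumann} with the PDC division induced by the index set $I$, and then verify that the hypotheses of Lemma~\ref{PDC lemma} are satisfied. Concretely, I would set $\A = \R^{n-|I|}$ carrying the measure $\P_\A = \L(\XI)$, $\B = \R^{|I|}$ carrying $\P_\B = \L(X_I)$, and identify $A \equiv \XI$, $B \equiv X_I$. Since the $X_j$ are independent, $\P = \P_\A \times \P_\B$, so assumption~(b) of Lemma~\ref{PDC lemma} holds. The event $E = \{T = t\}$ has the form required, and $T_a = T(\xI, \cdot)$ is measurable for each $\xI$, so assumption~(c) holds too. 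Lemma~\ref{PDC lemma} then guarantees that $\L(\XI \mid T = t)$ and $\L(X_I \mid T = t,\, \XI = \xI)$ exist and are unique up to a.s.\ equivalence, and that their joint law (rearranged by $\sigma_I$) equals $\L(\X')$.

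Next I would identify the rejection function $s$ from~\eqref{eqt}. For discrete random variables, the Radon-Nikodym derivative of $\L(\XI \mid T=t)$ with respect to $\L(\XI)$ is proportional to $\P(T = t \mid \XI = \xI)$. By the independence of $\XI$ and $X_I$ combined with~\eqref{DSH}, this equals $\P(X_I = \yI(\xI))$ when $\xI \in \EI$ and $0$ otherwise. Hence the correct weight $q$ for soft rejection sampling of $\L(\XI \mid T=t)$ from proposals drawn from $\L(\XI)$ is
\[
q(\xI) \;=\; \P\bigl(X_I = \yI(\xI)\bigr)\, \mathbbm{1}\{\xI \in \EI\},
\]
and assumptions~\eqref{A1} and~\eqref{A2} give $0 < \sup_\ell \P(X_I = \ell) < \infty$ and $\P_\A\{q>0\} > 0$. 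Substituting into~\eqref{eqt} produces exactly the acceptance probability appearing in Algorithm~\ref{PDC discrete}, so the accepted $\xI$ is distributed as $\L(\XI \mid T = t)$ by the standard von Neumann rejection argument invoked in Algorithm~\ref{PDC procedure von Neumann}.

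Finally, the second stage is trivialized by~\eqref{DSH}: conditional on $\XI = \xI \in \EI$ and $T = t$, the law $\L(X_I \mid T=t,\,\XI=\xI)$ is the point mass at $\yI(\xI)$, so ``sampling'' it reduces to returning $\yI$. Assembling the two stages and applying the conclusion $\L(X,Y) = \L(\X')$ of Lemma~\ref{PDC lemma} yields that $\sigma_I(\xI, \yI)$ is distributed as $\L(\X \mid T=t)$, which is~\eqref{distribution}. The only non-bookkeeping step is the identification of $q$ via independence and~\eqref{DSH}; this is where the discrete-versus-continuous distinction enters (in the continuous case one would invoke Theorem~\ref{theorem:3} to get the analogous density ratio), and it is the step I would write out most carefully.
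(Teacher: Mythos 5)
Your proposal is correct and follows essentially the same route as the paper: the paper likewise frames Algorithm~\ref{PDC discrete} as an instance of Algorithm~\ref{PDC procedure von Neumann} under Lemma~\ref{PDC lemma}, derives the rejection weight as the ratio of the conditional to the unconditional mass function of $\XI$ (which by independence is proportional to $\P(T=t\mid \XI=\xI)$), and then uses~\eqref{DSH} together with discreteness to rewrite that weight as $\P(X_I=\yI)/\max_\ell \P(X_I=\ell)$. Your identification of $q(\xI)=\P(X_I=\yI(\xI))\,\mathbbm{1}\{\xI\in\EI\}$ is exactly the step the paper writes out, so no gap.
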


\begin{proof}
We demonstrate that $ \frac{P\left(X_I = \yI\right)}{\max_\ell(P(X_I = \ell))}$ is the right proportion.

We let $h$ denote the probability mass function of $\L( \XI\, |\, E_n )$, and $g$ denote the probability mass function of $\L( \XI )$.  Our rejection proportion is of the form: suppose we observe state~$j$ under the distribution $g$, then we reject if
\[ u > \frac{h(j)}{C g(j)}, \]
where $C$ is any constant such that
\[h(\ell) \leq C g(\ell), \qquad \text{ for all states $\ell$}. \]
The quantity $C$ is the expected number of iterations of the acceptance/rejection procedure before we accept a sample, see for example \cite{devroye}, and so in particular we would like to find the smallest $C$.  Since our distributions are already specified, we obtain
\[ \frac{1}{C} = \min_\ell \frac{g(\ell)}{h(\ell)} = \min_\ell \frac{\P(T=k)}{\P(T=k | \XI = \ell)} = \frac{\P(T=k)}{\max_{\ell} \P(T_I(X_I|\ell) = t_I(\ell) | \XI = \ell )}, \]
and our rejection step reduces to 
\[ u > \frac{h(j)}{C g(j)} =  \frac{\P(T_I(X_I|j) = \tI(j))}{\max_{\ell} \P(T_I(X_I | \ell) = \tI(\ell))}.  
\]
By assumption~(DSH), once we accept $\xI$, the completion $\yI$ is unique.   Since $T_I$ is discrete, we have
\[ \frac{\P(T_I = \tI(j))}{\max_{\ell} \P(T_I = \tI(\ell))} = \frac{\P(X_I = \yI(j))}{\max_\ell \P(X_I = \ell)}.   \qedhere
\]
\end{proof}

\begin{theorem}\label{continuous theorem}
Algorithm~\ref{PDC continuous} samples from the distribution given in Equation~\eqref{distribution}.
\end{theorem}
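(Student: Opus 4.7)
My approach is to parallel the proof of Theorem~\ref{discrete theorem}, replacing the probability mass functions there with densities of the relevant disintegrations furnished by Theorem~\ref{theorem:3}. By Lemma~\ref{PDC lemma}, with the identification $A=\XI$ and $B=X_I$, it suffices to show that the acceptance step in Algorithm~\ref{PDC continuous} produces an exact sample $\xI$ from the first-half distribution $\L(\XI\mid T=t)$; under~\eqref{DSH} the completion $\yI$ is then uniquely determined, and $\sigma_I(\xI,\yI)$ carries the target distribution $\L((X_1,\ldots,X_n)\mid T=t)$.

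The next step is to identify the rejection function of Algorithm~\ref{PDC procedure von Neumann} in terms of densities. Conditional on $\XI=\xI$, independence of $\XI$ and $X_I$ gives $\L(X_I\mid\XI=\xI)=\L(X_I)$, so $T$ reduces to the deterministic measurable function $T_I(X_I\mid\xI)$ of $X_I$, whose density at $\tI(\xI)$ is by definition $f_{T_I}(\tI)$. Under the continuity hypotheses, Theorem~\ref{theorem:3} then justifies the classical density identity
\[
\frac{d\,\L(\XI\mid T=t)}{d\,\L(\XI)}(\xI) \;=\; \frac{f_{T_I}(\tI)}{f_T(t)},
\]
with the ratio equal to $0$ for $\xI\notin\EI$ by the cemetery convention $f_{T_I}(\Delta)=0$. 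Plugging this density into the soft-rejection function $s(\xI)=q(\xI)/\sup_{\xI'} q(\xI')$ of Algorithm~\ref{PDC procedure von Neumann} and cancelling the common factor $f_T(t)$ reproduces the ratio $f_{T_I}(\tI)/\sup_\ell f_{T_I}(\ell)$ appearing in Algorithm~\ref{PDC continuous}. Lemma~\ref{PDC lemma} then yields the conclusion.

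The main obstacle is the interpretation of $\sup_\ell f_{T_I}(\ell)$. Since $T_I\equiv T_I(\cdot\mid\xI)$ is parameterized by $\xI$, for this quantity to serve as a valid soft-rejection constant it must in fact be uniform in $\xI$, i.e.\ a joint supremum over $\ell$ and $\xI\in\EI$; its finiteness is precisely assumption~\eqref{A1} and its positivity follows from~\eqref{A2}. In the prototypical setting of Example~\ref{convolution:example:continuous}, the dependence of $T_I$ on $\xI$ is a pure translation, so $\sup_\ell f_{T_I}(\ell)$ coincides with the mode density $\sup_\ell f_{X_I}(\ell)$ and is automatically uniform in $\xI$; in the general case one simply bounds the joint supremum using~\eqref{A1} without any new conceptual input.
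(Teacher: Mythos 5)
Your proposal is correct and follows essentially the same route as the paper's proof: both identify the density ratio $h(\xI)/g(\xI) = f_{T_I|\xI}(\tI(\xI))/f_T(t)$ via Theorem~\ref{theorem:3}, cancel the constant $f_T(t)$, and normalize by the optimal constant $C$ to obtain the acceptance probability $f_{T_I}(\tI)/\sup_\ell f_{T_I}(\ell)$, with correctness then delivered by Lemma~\ref{PDC lemma} and the uniqueness of the completion under~\eqref{DSH}. The only (harmless) divergence is your reading of the normalizer as a joint supremum over the argument and over $\xI$, whereas the paper takes $\sup_\ell f_{T_I|\ell}(\tI(\ell))$ over conditioning values $\ell$ evaluated at the completion point; your larger constant still yields an exact sampler, merely a possibly less efficient one, and the two coincide in the translation-invariant setting of Example~\ref{convolution:example:continuous}.
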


\begin{proof}
The proof follows in a similar manner as the proof of Theorem~\ref{discrete theorem}, with probabilities replaced with probability density functions \emph{where appropriate}, which are guaranteed to exist by Theorem~\ref{theorem:3} since we assume $T$ is continuous. 
We have
\[ \frac{1}{C} = \inf_\ell \frac{g(\ell)}{h(\ell)} = \inf_\ell \frac{f_T(k)}{f_{T_I|\ell}(\tI(\ell))} = \frac{f_T(k)}{\sup_\ell f_{T_I|\ell}(\tI(\ell))}, \]
and the rejection step reduces to 
\[ u > \frac{h(j)}{C g(j)} = \frac{f_{T_I|j}(\tI(j))}{\sup_\ell f_{T_I|\ell}(\tI(\ell))}. \qedhere \]
\end{proof}

\begin{remark}\label{key:difference}{\rm
There is an important difference between algorithms~\ref{PDC discrete} and~\ref{PDC continuous}. 
When $X_I$ and $T_I$ are discrete, the random variables can be rearranged \emph{before} the soft rejection step from $\{T_I = \tI(\xI)\}$ to $\{X_I = \yI(\xI)\}$, whereas when $X_I$ and $T_I$ are continuous, one would have to apply a standard change of variables formula in order to determine the rejection probability in terms of the density of $X_I$; see for example Section~\ref{sphere}. 
}\end{remark}

\section{Speedup analysis}
\label{cost}

  The \WTGL\ acceptance condition is $\{\X \in E\}$, which in the discrete setting may be written as 
\[ \{ \text{$\XI \in \EI$ and $U < P(X_I = \yI)$}\}. \]
The acceptance condition for Algorithm~\ref{PDC discrete} is 
\begin{equation}\label{PDCTSH discrete}
\left\{ \text{$\XI \in \EI$ and $U < \frac{\P( X_I = \yI)}{\max_\ell \P(X_I = \ell)}$}\right\}.
\end{equation}
It is easy to see that there exists a coupling of the random variables such that all events giving an acceptance in the \WTGL\ algorithm would also be accepted in the \PDCTSH\ algorithm.  The added efficiency in this case comes from the use of soft rejection sampling, which enlarges the space by the factor $\max_\ell \P(X_I = \ell)^{-1}$.  

For continuous random variables, \soft\ transforms the otherwise expected infinite-time \WTGL\ algorithm into an acceptance condition of the form 
\begin{equation}\label{PDCTSH continuous}
\left\{ \text{$\XI \in \EI$ and $U < \frac{f_{T_I|\XI}(\tI)}{\sup_\ell f_{T_I|\XI}(\ell)}$}\right\},
\end{equation}
which is an event of \emph{positive} probability under assumptions~\eqref{A1},~\eqref{A2},~\eqref{DSH}.

Since the memory requirements for \WTGL\ and the algorithms presented are on the same order of magnitude, we focus solely on run--time.  
We assume that arithmetic operations are negligible, whereas the cost of generating a single random uniform variate from a given interval is $O(1)$, and the cost of generating $n$ independent uniform random variables from a given interval is $O(n)$, regardless of the magnitudes of the values\footnote{One might think of this as fixed floating-point precision implemented on a computer. }.  
Implicitly, we also assume, quite critically, but also quite reasonably for many applications, that computing $\yI$ and $t_I$ is always $O(1)$; that is, we assume that completing the sample is indeed a trivial matter. 

\begin{definition}
   For a given algorithm $P$, which generates a sample from a distribution $\L(\X)$, we denote the \emph{expected} time of completion by $\TIME_P(\X)$.  When there is no subscript, we assume there is a default direct sampling method available.  
   
   When $P$ is the \WTGL\ algorithm, we denote the expected time of completion by \WTGLTIME$(\X')$.  
When $0<\TIME_P(\X)<\infty$, the \speedup\ of algorithm $P$ relative to \WTGL\ is defined by
\begin{equation}\label{speedup}
\speedup :=  \frac{\WTGLTIME(\X')}{\TIME_P(\X')}.
\end{equation}
\end{definition}

\begin{remark}{\rm
We have 
\begin{align*}
\WTGLTIME(\X') & = O\left(\frac{\TIME(\X)}{\P(\X \in E)}\right) .
\end{align*}
For all algorithms $P$ with $0<\TIME(P) < \infty$, we have  $\speedup\in(0,\infty]$, with $\speedup>1$ representing an improvement to \WTGL.
In the case when \WTGL\ does not terminate in finite time with probability 1, we write $\speedup = \infty$.  
}\end{remark}

\begin{theorem}\label{discrete speedup theorem}
Let \TIMEDDSH$(\X')$ denote the expected run-time of Algorithm~\ref{PDC discrete}.  We have 
\[ \TIMEDDSH(\X') = O\left(\WTGLTIME(\X') \max_\ell \P(X_I = \ell)\right), \]
whence
\begin{equation}\label{discrete speedup}
 \speedup = \Omega\left( \left(\max_\ell \P(X_I=\ell)\right)^{-1} \right). 
 \end{equation}
\end{theorem}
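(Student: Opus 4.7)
The plan is a direct calculation of the per-trial cost and the per-trial acceptance probability of Algorithm~\ref{PDC discrete}, followed by the standard geometric waiting time identity. A single iteration consists of drawing $\XI$ from $\L(\XI)$ (cost $O(\TIME(\X))$ by our costing convention), testing membership $\xI \in \EI$, and computing $\yI(\xI)$ (both assumed $O(1)$), and, when $\xI \in \EI$, drawing one auxiliary uniform $U$ and comparing it to the rejection quotient. Since arithmetic and a single uniform draw are $O(1)$, the expected work per trial is $O(\TIME(\X))$.

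Next I would compute the per-trial acceptance probability $p$, defined by the event in~\eqref{PDCTSH discrete}. Conditioning on $\XI$ and using independence of $U$ from $\XI$,
\begin{align*}
p &= \sum_{\xI \in \EI} \P(\XI = \xI)\,\frac{\P(X_I = \yI(\xI))}{\max_\ell \P(X_I = \ell)} \\
  &= \frac{1}{\max_\ell \P(X_I = \ell)} \sum_{\xI \in \EI} \P\bigl(\XI = \xI,\ X_I = \yI(\xI)\bigr),
\end{align*}
using independence of $\XI$ and $X_I$ in the last step. By~\eqref{DSH}, the map $\xI \mapsto \sigma_I(\xI,\yI(\xI))$ is a bijection between $\EI$ and $E_n$ (each point of $E_n$ projects to a unique $\xI \in \EI$, and $\yI(\xI)$ is the only valid completion), so the sum collapses to $\P(\X \in E_n)$. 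Hence $p = \P(\X \in E_n)/\max_\ell \P(X_I = \ell)$.

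The expected number of trials until acceptance is $1/p$, so combining with the per-trial cost gives
\[
\TIMEDDSH(\X') = O\!\left(\TIME(\X)\cdot \frac{\max_\ell \P(X_I = \ell)}{\P(\X \in E_n)}\right) = O\!\left(\WTGLTIME(\X')\cdot \max_\ell \P(X_I = \ell)\right),
\]
where the second equality uses the identity $\WTGLTIME(\X') = O(\TIME(\X)/\P(\X \in E_n))$ recorded in the preceding remark. Dividing $\WTGLTIME(\X')$ by this bound yields the speedup estimate~\eqref{discrete speedup}.

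The only mildly delicate point is the bijection step collapsing the sum to $\P(\X \in E_n)$: this uses~\eqref{DSH} (uniqueness of $\yI(\xI)$) together with the definition of $\EI$ as the projection of $E_n$; without~\eqref{DSH} one would only get an inequality. Everything else is bookkeeping.
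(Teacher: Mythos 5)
Your proposal is correct and follows essentially the same route as the paper: both bound the expected run-time as (per-trial cost of sampling $\L(\XI)$) times (expected number of trials $C = \max_\ell \P(X_I=\ell)/\P(T=k)$), then rewrite via $\WTGLTIME(\X') = O(\TIME(\X)/\P(\X\in E))$. The only difference is presentational: you re-derive the acceptance probability $\P(\X\in E_n)/\max_\ell \P(X_I=\ell)$ from scratch using the \eqref{DSH} bijection between $\EI$ and $E_n$, whereas the paper simply reuses the optimal rejection constant $C$ already identified in the proof of Theorem~\ref{discrete theorem}.
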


\begin{proof}
Recall the optimal value of $C$ is given by
\[ C = \frac{\max_\ell \P(X_I =\ell)}{\P(T=k)}. \]
The cost of this algorithm with the optimal $C$ is then
\begin{align*}
 \TIME\left( \XI  \right) \, C  & = O\left(\frac{\TIME\left(\XI\right)}{P(T=k)} \max_\ell \P(X_I = \ell)\right) \\
   & = O\left(\WTGLTIME\left(\X'\right) \max_\ell \P(X_I = \ell)\right),
\end{align*}
which by Equation~\eqref{speedup} implies the speedup is $\Omega(\max_\ell \P(X_I = \ell))^{-1}$.
\end{proof}

\begin{remark}\label{optimal:I:discrete}{\rm 
Theorem~\ref{discrete speedup theorem} indicates that the optimal choice of $I$ in Algorithm~\ref{PDC discrete} is one that minimizes the maximal point mass in the distribution of $X_I$.  
}\end{remark}

\begin{theorem}\label{continuous PDC theorem}
Let \TIMECDSH$(\X')$ denote the expected run-time of Algorithm~\ref{PDC continuous}.  We have 
\[ \TIMECDSH(\X') = O\left( \WTGLTIME\left(\XI|\EI\right)\ \sup_\ell f_{T_I|\ell}(\tI(\ell))\right). \]
Also,  $\speedup = \infty$. 
\end{theorem}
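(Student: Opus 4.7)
The plan is to write $\TIMECDSH(\X')$ as the product of the per-iteration cost of Algorithm~\ref{PDC continuous} and the expected number of iterations until acceptance, in direct analogy with the proof of Theorem~\ref{discrete speedup theorem}. The per-iteration cost is $O(\TIME(\XI))$, since one samples $\XI$ from $\L(\XI)$ and performs only $O(1)$ additional work to compute $\yI$, $\tI$, evaluate the rejection ratio, and compare with a uniform draw.

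The main analytic step is to show that the single-iteration acceptance probability equals $f_T(t)/M$, where $M := \sup_\ell f_{T_I|\ell}(\tI(\ell))$. By unpacking the algorithm,
\begin{equation*}
p = \int_{\EI} \frac{f_{T_I|\xI}(\tI(\xI))}{M}\, f_\XI(\xI)\, d\xI,
\end{equation*}
so the heart of the argument is the continuous disintegration identity
\begin{equation*}
\int_{\EI} f_{T_I|\xI}(\tI(\xI))\, f_\XI(\xI)\, d\xI = f_T(t).
\end{equation*}
This is the step where I expect the main obstacle, since unlike the discrete case one cannot rearrange point masses directly and must invoke measure-theoretic machinery. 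The cleanest route I foresee is to apply Theorem~\ref{theorem:3}(ii) to $\P$ under the map $T$, identifying $f_{T_I|\xI}(\tI(\xI))$ with the conditional density of $T$ given $\XI = \xI$ evaluated at $t$ (and noting $\tI(\xI) = t$ for all $\xI \in \EI$ by the unique-completion assumption~\eqref{DSH}). Assumptions~\eqref{A1} and~\eqref{A2} ensure $M$ and $f_T(t)$ are strictly positive and finite, so the expected number of iterations $M/f_T(t)$ is finite.

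Combining the two factors yields $\TIMECDSH(\X') = O(\TIME(\XI)\, M/f_T(t))$. Absorbing the positive problem-dependent constant $1/f_T(t)$ into the big-$O$ --- as is done implicitly for $1/\P(T=k)$ in the discrete Theorem~\ref{discrete speedup theorem} --- and using $\TIME(\XI) \leq \TIME(\XI)/\P(\XI \in \EI) = \WTGLTIME(\XI|\EI)$, one obtains the asserted bound $\TIMECDSH(\X') = O(\WTGLTIME(\XI|\EI)\, M)$. For the speedup claim: continuity of $T$ implies $\P(T=t) = 0$, hence $\P(\X \in E) = 0$ and \WTGL\ does not terminate in finite time with probability one; by the convention stated just before the theorem, this gives $\speedup = \infty$, while $\TIMECDSH(\X') < \infty$ by the above analysis.
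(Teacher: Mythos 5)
Your proof is correct and follows essentially the same route as the paper's: the paper likewise takes the optimal rejection constant $C = \sup_\ell f_{T_I|\ell}(t_I(\ell))/f_T(k)$, multiplies it by the per-iteration cost $\TIME(\XI)$ to get the stated bound, and concludes $\speedup = \infty$ from $\P(\X \in E) = 0$ together with finiteness of the PDC runtime. The only difference is that you spell out the disintegration identity justifying the acceptance probability $f_T(t)/M$, a step the paper delegates to the proof of Theorem~\ref{continuous theorem}.
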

\begin{proof}
The optimal value of $C$ is given by
\[ C = \frac{\sup_\ell f_{T_I|\ell}(\tI(\ell))}{f_T(k)}. \]
The cost of this algorithm with the optimal $C$ is then
\begin{align*}
 \TIME\left( \XI  \right) \, C  & = O\left(\TIME\left(\XI\right)\frac{\sup_\ell f_{T_I|\ell}(\tI(\ell))}{f_T(k)} \right) \\
   & = O\left(\WTGLTIME\left(\XI|\EI\right)\sup_\ell f_{T_I|\ell}(\tI(\ell)) \right).
\end{align*}
By assumption, $E = \{T = t\}$ for some continuous random variable $T$ with a density and $t \in \mbox{range}(T)$, and so $\P(\X \in E) = 0$, whence, since Algorithm~\ref{PDC continuous} has finite expected time, we have $\speedup = \infty$. 
\end{proof}

\section{An illustrative example}
\label{example}
This section highlights the fact that PDC is not simply rejection sampling. 
It also motivates the use of disintegrations from Section~\ref{section:disintegration} rather than elementary conditioning. 

An example which demonstrates that care must be taken when conditioning on events of probability 0 is given by the following example of \cite{Proschan} (see also \cite[Section 4.9.3]{CasellaBerger}).  The problem stated on a particular exam is as follows: \emph{If $U$ and $V$ are independent standard normals, what is the conditional distribution of $V$ given that $V = U$?}   
The three distinct ways in which this problem was solved started with the following joint distributions: \\ \\
\begin{tabular}{llcl}
(1) $\bigl(\ (U,V)$&$\vert$ & $U-V=0$& $\bigr)$; \\
(2) $\bigl(\ (U,V)$&$\vert$ &  $\frac{U}{V} = 1$& $\bigr)$; \\
(3) $\bigl(\ (U,V)$&$\vert$ &  $\mathbbm{1}(U=V)$ & $\bigr)$.
\end{tabular} \\ \\
In our notation, this is the same as $((U,V)\, |\, T = t)$, where  \\ \\
\begin{tabular}{lll}
(1) $T = U-V$ & and & $t=0$; \\
(2) $T = V/U$ & and & $t=1$; \\
(3) $T = \mathbbm{1}(U=V)$ &and& $t=1$.
\end{tabular} \\ \\
To apply Algorithm~\ref{PDC continuous}, we first sample $U$ from a standard normal distribution, and apply a rejection depending on the distribution $\L(T_I\, |\, \XI = a)$; we have \\ \\
\begin{tabular}{ll}
(1) $T_I = V-a$, & reject if $u > e^{-a^2/2}$; \\
(2) $T_I = a/V$, &  reject if $u > |a|\, e^{-a^2} / \sqrt{2\, e}$; \\
(3) $T_I = \mathbbm{1}(V=a)$, &Not applicable.
\end{tabular} \\ \\
Note that the last case, $T = \mathbbm{1}(U=V)$, does not satisfy the assumptions of Algorithm~\ref{PDC discrete} or Algorithm~\ref{PDC continuous}; that is, $\P(\X \in E)$ is not an event of positive probability, nor is $T$ a random variable with a density.
Rather than applying Algorithm~\ref{PDC continuous}, we can instead determine the conditional distribution $\L(V \, |\, T=t)$ directly, as the original problem demands, sample according to that distribution, and then appeal to Lemma~\ref{PDC lemma} and Algorithm~\ref{PDC procedure} directly. \\ \\
\begin{tabular}{lll}
(1) $T_I = V-a$, & $f_{V | T=0}(v) = e^{-v^2}/\sqrt{\pi}$, & $-\infty < v < \infty$; \\
(2) $T_I = a/V$, &  $f_{V | T=1}(v) = |v| e^{-v^2}$, & $-\infty < v < \infty$; \\
(3) $T_I = \mathbbm{1}(V=a)$, & $f_{V | T = 1}(v) = e^{-v^2/2}/\sqrt{2\pi},$ &  $-\infty < v < \infty$.  
\end{tabular} \\ \\
In each of these cases, the density $f_{V|T}$ is guaranteed to exist by Theorem~\ref{theorem:3}, and we have $\L(V\, |\, T=t, U=a)$ is a point mass at $a$, $a \in \R$.

\section{Applications of a theoretical nature}
\label{continuous examples}

\subsection{Uniform weight}
\label{uniform}

This section contains a compelling application of PDC deterministic second half, one which is implicit in many other applications.  
That is, when the random variable $T_I$ assigns the same weight to $\tI$ for all $\xI \in \EI$, then each sample generated from $\L(\XI)$ is accepted with the same proportion, which can be scaled up to 1, giving a rejection probability of 0.

\begin{theorem}\label{uniform theorem}
  Algorithm~\ref{PDC Uniform} generates a sample from the distribution $\L(\X')$, with expected runtime $O(\TIME(\XI)\,~\P(\XI\in\EI)^{-1}).$ 
\end{theorem}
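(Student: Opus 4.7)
The plan is to recognize Algorithm~\ref{PDC Uniform} as the degenerate instance of Algorithms~\ref{PDC discrete} or~\ref{PDC continuous} in which the soft rejection step collapses to an always-accept step whenever $\xI \in \EI$. Correctness and runtime should then both follow as short corollaries of the preceding development.

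First, I would unpack the hypothesis. In the discrete setting, the assertion that $T_I$ assigns the same weight to $\tI$ for all $\xI \in \EI$ means that $\P(T_I = \tI(\xI))$ takes a common value $c$ for every $\xI \in \EI$; by assumption~\eqref{DSH} combined with the change-of-variables step in the proof of Theorem~\ref{discrete theorem}, this is equivalent to $\P(X_I = \yI(\xI)) \equiv c$ on $\EI$. The continuous case is analogous with $f_{T_I|\xI}(\tI(\xI)) \equiv c$, using Theorem~\ref{theorem:3} to guarantee that the conditional density is well-defined.

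Next, I would invoke the remark following Algorithm~\ref{PDC procedure von Neumann}: since the rejection ratio in Algorithm~\ref{PDC discrete} or Algorithm~\ref{PDC continuous} is only ever evaluated on draws with $\xI \in \EI$ (failures of the completability test cause outright rejection), we may replace the global supremum in the denominator by any constant $C$ that dominates the numerator on $\EI$, without compromising exactness. Taking $C = c$, the rejection ratio becomes identically $1$ on $\EI$. Hence Algorithm~\ref{PDC Uniform} is just: draw $\XI$, accept iff $\xI \in \EI$, and return $\sigma_I(\xI,\yI)$. Theorems~\ref{discrete theorem} and~\ref{continuous theorem} then deliver that the output is an exact sample from $\L(\X')$.

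For the runtime, each iteration costs $\TIME(\XI)$ for the draw plus $O(1)$ for the completability test and (when applicable) the construction of $\yI$, under the computational conventions of Section~\ref{cost}. Since the per-iteration acceptance probability is exactly $\P(\XI \in \EI)$, the number of iterations to acceptance is geometric with mean $\P(\XI \in \EI)^{-1}$, yielding the bound $O(\TIME(\XI)\,\P(\XI \in \EI)^{-1})$. The main subtlety is the use of a ``localized'' dominating constant $C = c$ rather than a global supremum; this is justified precisely because the rejection step never sees $\xI \notin \EI$, so the dominating inequality need only hold on $\EI$. Once this is articulated, the theorem follows as a direct corollary of Theorems~\ref{discrete theorem} and~\ref{continuous theorem} together with the standard geometric waiting-time argument.
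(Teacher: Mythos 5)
Your proposal is correct and follows essentially the same route as the paper: the uniform-weight hypothesis makes the soft-rejection ratio identically $1$ on $\EI$, so the algorithm accepts every completable draw, and the expected runtime is the geometric waiting time $\P(\XI \in \EI)^{-1}$ times the cost of one draw from $\L(\XI)$. The only cosmetic difference is that you justify the unit acceptance ratio by localizing the dominating constant to $\EI$, whereas the paper computes $s(a) = q(a)/\sup_\ell q(\ell) = 1$ directly, since $q$ vanishes off $\EI$ (via the cemetery-state convention) and the global supremum therefore already equals the common value.
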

\begin{proof}
If $\XI \notin\EI$, we reject with probability 1.  
Assuming $\XI \in \EI$, the rejection function is given by 
\[s(a) = \frac{q(a)}{\sup_{\ell \in \A} q(\ell)} = 1, \] 
thus we reject with probability 0 any sample that is completable.  
The expected runtime is therefore the inverse of the probability of generating a completable sample, i.e., $\P(\XI \in \EI)^{-1}$, times the cost to generate a sample from $\L(\XI)$. 
\end{proof}

\begin{algorithm} 
\caption{PDC deterministic second half with uniform weight}
\begin{algorithmic}
\State {\bf Input:} 
\begin{itemize}
\item[] Distributions $\L(X_1), \ldots, \L(X_n)$;
\item[] measurable function $T$, $t \in \mbox{range}(T)$; 
\item[] index set~$I \subset \{1, \ldots, n\}$. 
\end{itemize}
\State {\bf Output:} 
\begin{itemize}
\item[] A sample from $\L( (X_1, \ldots, X_n) | T=t).$ 
\end{itemize}
\State \assume 
\begin{itemize}
\item[] \eqref{A1}, \eqref{A2}, \eqref{DSH}; 
\item[] $q(a) = q(b)$ for all $a,b \in \EI$. 
\end{itemize} \\
\State Sample from $\L(\XI)$, denote the observation by $\xI$.
\If {$\xI \in \EI}$
\State \return $\sigma_I(\xI,\yI)$
\Else
\State \restart
\EndIf
\end{algorithmic} \label{PDC Uniform}
\end{algorithm}

See Section~\ref{polytope} for an example involving sums of independent uniform random variables. 

\subsection{Exponential Distribution}

When the $X_i$, $i=1,\ldots,n$, are independent and exponentially distributed random variables with parameters $\lambda_i > 0$, the marginal density function of $X_i$ is given by 
\[ f_{X_i}(x) = \lambda_i e^{-\lambda_i x}, \qquad x>0. \]
When the event $E$ is of the form $E = \{\sum_{i=1}^n X_i = k\}$, then $\P(E) = 0$, and there is no \WTGL\ algorithm in general.  
We take $I = \{i\}$, then since the density $f_{T_i}$ is bounded by $\lambda_i$, we apply Algorithm~\ref{PDC continuous}.  
The acceptance condition is thus
\[ \left\{ \text{\rm $\sum_{j\neq i} X_j \leq k$ and $U < e^{-\lambda_i y_I}$ }\right\}, \]
where $y_I = k - \sum_{j\neq i}x_j$.  
By Theorem~\ref{continuous PDC theorem}, we have
\[ \TIMECDSH(\X') = O\left(\frac{\lambda_i}{\P(\sum_{j\neq i} X_j \leq k)} \right).\]

\subsection{Beta Distribution}

A continuous random variable $X$ is said to have Beta$(\alpha, \beta)$ distribution, $\alpha>0, \beta>0$, if it has density
\[ f_X(x) = c_{\alpha,\beta}\, x^{\alpha-1} (1-x)^{\beta-1}, \qquad 0<x<1,\]
where $c_{\alpha,\beta}$ is the normalization constant.  When at least one of $\alpha, \beta$ is less than 1,  the density $f_X(x)$ is not bounded.  
When both $\alpha$ and $\beta$ are at least 1, then we have 
\[ \max_x f_X(x) = \frac{\alpha - 1}{\alpha+\beta-2}, \qquad \alpha,\beta > 1. \]
Thus, if we consider $X_1, X_2, \ldots, X_n$ independent Beta$(\alpha_j, \beta_j)$, $j=1,\ldots,n$, with $E = \{\sum_{j=1}^n X_j = k\}$, then as long as there exists an index $i$ such that both $\alpha_i$ and $\beta_i$ are greater than 1,  we can apply Algorithm~\ref{PDC continuous}.  The acceptance condition is given by
\[ \left\{ \text{\rm $\sum_{j\neq i} X_i \in [k,k-1]$ and $U < \frac{ c_{\alpha,\beta}(\alpha+\beta-2)}{\alpha-1} y_I^{\alpha-1}(1-y_I)^{\beta-1}$ }\right\}, \]
where $y_I = k - \sum_{j\neq i}x_i$.

\subsection{Small Ball Probabilities}

Suppose $X_i$, $i\geq 1$ are i.i.d.~with distribution $\P(X_i=1) = \P(X_i=-1) = \frac{1}{2}$.  Let $w_i$ denote real--valued weights with $|w_i| \geq 1$, $i\geq 1$.  Define $T := \sum_{i=1}^n w_i X_i$.  Then for some open set $G$, $\P(T \in G)$ is known as the small ball probability, see for example \cite{Nguyen}.  To obtain sample paths, the simplest approach is to apply \WTGL.  However, noting that $X_i$ is actually a discrete uniform distribution over the set $\{-1,1\}$, using Algorithm~\ref{PDC Uniform} we can apply \PDCTSH\ and select any index $I$, and sample until $\{\XI \in G^{(I)}\}$, where $G^{(I)} = \bigcup_{g \in G} (g+w_I) \cup (g-w_I)$.  

When $G$ is an open set of length $2r$, then it was shown in \cite{Erdos} that $P(T \in G)$ is at most $2^{-n}$ times the sum of the largest $r$ binomial coefficients in $n$.  Let us assume that the values $w_i$ are all integer--valued, and $G = (-1,1)$.  Let $S_r(n)$ denote the sum of the largest $r$ binomial coefficients in $n$.  We have
\[ \P\left(T \in (-1,1)\right) \leq 2^{-n} S_1(n) = 2^{-n} \binom{n}{\lfloor n/2\rfloor}. \]
If we apply \PDCTSH, this becomes
\[ \P\left(\XI \in (-w_I-1,-w_I+1) \cup (w_I-1, w_I+1)\right) \leq 2^{-(n-1)} S_2(n-1) = 2^{-(n-1)} \binom{n}{\lfloor n/2\rfloor}, \]
which saves at most an anticipated factor of 2.  

Also, as was exploited in \cite{RBM}, if in addition there exist two distinct elements $w_j \neq w_\ell$, then we let $I = \{j,\ell\}$, and we have the range of $w_j X_j + w_\ell X_\ell$ is uniform over four distinct elements, say $\{v_1, v_2, v_3, v_4\}$; let $V_i = (v_i-1, v_i+1)$, then $V := \cup_{i=1}^4 V_i$ is an open set of length $8$, whence 
\[ \P(T - w_jX_j - w_\ell X_\ell \in V) \leq 2^{-(n-2)}S_4(n-2). \]
One can keep going with this idea.  If we let $I = \{j_1, j_2, \ldots\}$, then we must have that $\sum_{\ell} w_{j_\ell}X_{j_\ell}$ is uniform over distinct elements; i.e., each combination of $\pm 1$ in $X_{j_\ell}$ must yield a distinct element for the sum.  This is true, e.g., if $w_{j_1} = 1$, $w_{j_2} = 2$, \ldots, $w_{j_\ell} = 2^\ell$.

\subsection{Sampling from the surface of the $n$--sphere}\label{sphere}
The following is~\emph{not} an example of PDC deterministic second half, but serves to illustrate the versatility of the PDC approach. 
Consider the distribution
\[ \left(X_1, \ldots, X_n\, |\, X_1^2 + \ldots + X_n^2 = k\right), \]
where all random variables are continuous, which corresponds to some distribution on the surface of an $n$--sphere.  It is known how to obtain the uniform distribution and certain other distributions over the surface of an $n$--sphere, see e.g. \cite{devroye, Diaconis} (see also~\cite{Ball} for a generalization to the $\ell_p^n$-ball); however, if we change the form of the conditioning slightly (for example, replace $X_1^2$ with $X_1$ in the conditioning event), or place a particular demand on any of the marginal laws of $X_1, \ldots, X_n$, then these techniques do not generalize in a straightforward manner. 
PDC, on the other hand, is robust with respect to small changes. 

We take $I = \{i\}$, for some $i\in \{1,\ldots, n\}$.  Then we have 
\[ {\L(T_I\, |\, \XI=\xI, E) = \L(X_i^2\, |\, X_i^2 = t_I(\xI))}.\]
The rejection has the form: 
\[ \left\{y_I \in \text{range}(X_i^2) \text{ and } U < \frac{\frac{1}{2\sqrt{\tI}}\left( f_{X_i}(\sqrt{\tI}) + f_{X_i}(-\sqrt{\tI})\right)}{\sup_\ell f_{X_i^2}(\ell)} \right\}. \]
Note that we have calculated explicitly the transformation from the distribution $\L(T_I\, |\, \XI)$ to the distribution $\L(X_I)$.  This is \emph{almost} \PDCTSH, see~\cite{DeSalvoSudoku, DeSalvoImprovements}, because there are actually two possible values for $X_I$ to complete the sample, even though the distribution $\L(T_I\, |\, \XI)$ is trivial. 
Once a sample is accepted, we simply choose an outcome, $\sqrt{\tI}$ or $-\sqrt{\tI},$ in proportion to its value determined by the density function $f_{X_i}$.

This example also illustrates why the deterministic second half condition~\eqref{DSH} is a statement about the number of ways to complete a sample given $\XI \in \EI,$ rather than a statement about the triviality of the distribution $\L(T_I\, |\, \XI)$.

\subsection{Uniform Spacings}

Suppose we place $m$ points uniformly distributed over the interval $[0,1]$, call them $u_1, u_2, \ldots, u_m$.  Let $u_{(1)}$, $u_{(2)}$, \ldots, $u_{(m)}$ denote the ordering of the points in ascending order.  Then it is well--known, see for example \cite{devroye, Feller2}, that the marginal distributions are given by 
\[ u_{(i)} \sim {\rm Beta}\left( i, m+1-i\right), \qquad i=1,\ldots,n. \]
The differences between consecutive points in the interval are in fact i.i.d.~with (taking $u_{(0)}=0, u_{(m+1)})=1$)
\[ u_{(i)}-u_{(i-1)} \stackrel{D}{=} \frac{E_i}{\sum_{i=1}^{m+1} E_i}, \qquad i=1,\ldots,m+1,\]
 where $E_i$ are exponential distributions with parameter 1.  In other words, to obtain a sample from $(E_1, E_2, \ldots, E_n\, |\, \sum_{i=1}^n E_i = 1)$, one can follow the steps above, which does not use PDC.  
Of course, as alluded to in Section~\ref{sphere}, any deviation from this very specific form of distribution renders this approach effectively useless, whereas PDC can still be applied.
 
\subsection{Convex Polytope Sampling}
\label{polytope}
Suppose we wish to sample from a convex polytope $P\subset \R^n$ with vertices $\{v_1, \ldots, v_m\}$.  Assuming none of the points $v_i$ are degenerate, i.e., there do not exist any $v_i$ such that $v_i \in \text{ConvexHull}\{v_1, \ldots, v_{i-1},v_{i+1},\ldots,v_m\}$, we can sample uniformly from $P$ via Algorithm~\ref{Polytope} (see \cite{Feller2}; see also \cite{devroye}).

\begin{algorithm}
\caption{\cite{Feller2} Convex polytope sampling} 
\begin{algorithmic}
\State \assume $P$ is a convex polytope with $m$ vertices. 
\State Generate $(U_1, U_2, \ldots, U_{m-1})$ i.i.d in the interval $[0,1]$, denoted by $(u_1, \ldots, u_{m-1}).$
\State Sort the points and denote them as $u_{(1)},\ldots,u_{(m-1)}.$  Let $u_{(0)} = 0$ and $u_{(m)} = 1$. 
\State Let $y_i = u_{(i)}-u_{(i-1)}$ for $i=1, \ldots, m$. 
\State \return  $\sum_{i=1}^m y_i\,  v_i$.
\end{algorithmic}
\label{Polytope}
\end{algorithm}

The key aspect of this algorithm is that it has a time and memory requirement on the order of $m$, the number of vertices, which means that it is not efficient for polytopes with a large number of vertices compared to their dimension, which we now demonstrate. 

The hypersimplex $H_{n,k}$ is defined as 
\[ H_{n,k} = \left\{ (x_1, \ldots, x_n) \in [0,1]^n : \sum_{i=1}^n x_i = k \right\}. \]
We can obtain a random point inside the hypersimplex using Section~\ref{uniform}.    Each coordinate is uniformly distributed over the interval $[0,1]$, so by Theorem~\ref{uniform theorem}, the \PDCTSH\ algorithm is simply to sample $(u_2, \ldots, u_n)$ from independent uniform distributions over $[0,1]$ until $u_1 := k - \sum_{i=2}^n u_i \in [0,1]$.

The Permutahedron $P_n$, see e.g.,~\cite{Postnikov}, is the convex hull of all $m = n!$ permutations of the coordinates of the point $(1,2,\ldots, n)$.  It can be described as follows: 
\[ P_n = \left\{(x_1, \ldots, x_n) \in [1,n]^n : \sum_{i=1}^n x_i = \binom{n+1}{2}, R \right\}, \]
where $R$ denotes Rado's condition~\cite{Rado}, which is 
\[ R = \{\text{for all $j\geq 1$, } x_{(n)}+\ldots + x_{(j)} \leq n+(n-1)+\ldots+j\}.\]
  Let
\[ Q_n = \left\{(x_1, \ldots, x_n) \in [1,n]^n : \sum_{i=1}^n x_i = \binom{n+1}{2} \right\}. \]
This is a scaled version of the hypersimplex, which can be sampled using Section~\ref{uniform} where each coordinate is uniformly distributed over the interval $[1,n]$.  Its asymptotic volume is given in \cite{Eulerian} as
\[ \text{\rm Vol}(Q_n) \sim (n-1)^n \sqrt{\frac{6}{\pi n}}. \]
It is well--known that $\text{\rm Vol}(P_n) = n^{n-2}$, see for example~\cite[Proposition~2.4]{Postnikov}, i.e., the number of forests on $n$ labeled vertices.  Hence, the probability that a point in $Q_n$ is also in $P_n$ is given by
\[ \P(R | Q_n) \sim \frac{n^{n-2}}{ (n-1)^n\sqrt{\frac{6}{\pi n}}} \sim n^{-3/2}\, e\, \sqrt{\frac{\pi}{6}}. \]
Thus, to sample points from inside the permutahedron, one can first sample points from inside the scaled hypersimplex using Algorithm~\ref{PDC Uniform}, and then apply \WTGL\ with respect to Rado's condition $R$.

\section{Applications to combinatorial classes}
\label{combinatorial classes}

\subsection{Table Methods}
In any discussion of random sampling of combinatorial structures, invariably one is led to the recursive method of Nijenhuis and Wilf \cite{NW}, which uses a table of values to calculate conditional probability distributions based on recursive properties of a corresponding combinatorial sequence. 
This method has several costs:
\begin{itemize}
\item[1.] Computational cost to create the table
\item[2.] Storage cost to store the table
\item[3.] Computational cost to generate samples from the table.
\end{itemize}
If one is capable of handling items 1 and 2, then the table methods are typically the fastest known methods for random generation, since they provide fast lookups equivalent to unranking algorithms; a good survey of these and other similar algorithms is \cite{StantonWhite}; see also~\cite{denise1999uniform}.  
An extensive treatment of how to apply PDC in this case is given in~\cite{DeSalvoImprovements}, which offers a larger speedup at the expense of creating a partial table, and applies the recursive method to a smaller-sized set of objects. 
The advantage of our current approach is that it is table-free and simple to implement, while still offering an improvement over hard rejection sampling.

\subsection{Assemblies, Multisets, and Selections}\label{sect:iparcs}
  Let $w: \mathbb{N}\to \mathbb{R}$ denote a weighting function, and define the sequence $w_i := w(i)$, $i\geq 1$, and we interpret $a \cdot b$ as the usual dot product.  
For the remainder of this section, in order to keep the notation in \cite{IPARCS}, we define ${\bf Z} \equiv \X$ and let $T = {\bf Z}\cdot w$.   
In the examples that follow, the random vector {\bf Z} is discrete, hence the event $\{T = t\}$ has strictly positive probability for each $t \in \text{range}(T)$.  
     
A well-known example is that of the cycle decomposition of a permutation. 
Letting $C_i(n)$ denote the number of cycles of length $i$ in a random permutation of $n$, it was shown in \cite{SheppLloyd} that the joint distribution of all cycle lengths satisfies
\[ \L(C_1(n), \ldots, C_n(n)) = \L\left( (Z_1, \ldots, Z_n) \bigg| \sum_{i=1}^n i Z_i = n\right), \]
where $Z_1, \ldots, Z_n$ are independent Poisson random variables with $\e Z_i = 1/i,$ $i=1,2,\ldots,n$. 

There are many combinatorial distributions that can be described in this fashion.  
       In \cite{IPARCS}, a unified framework is presented which is applied to three main types of combinatorial structures.  We recount some of the basic definitions, and refer the interested reader to \cite{IPARCS} for a more inspiring exposition.
     
     For $\bfa = (a_1, a_2, \ldots, a_n)$ a sequence of nonnegative integers, and weights $\bfw = (w_1, \ldots, w_n)$, let $N(n,\bfa, \bfw)$ denote the number of combinatorial objects of weight $n$ having $a_i$ components of size~$w_i$, $i=1,\ldots,n$.  Since each component has size~$w_i$, the total contribution to the weight of the object by component $i$ is $w_i\, a_i$, and summing over all $i$ gives us the total weight $n = \sum_{i=1}^n w_i\,a_i$ of the object.   Suppose $J \subset \{1,\ldots, n\}$.  The examples of interest will have the following form:
\[ N(n,\bfa, \bfw) = 1(\bfw\cdot \bfa = n) f(J, n) \prod_{i\in I} g_i(a_i), \]
for some functions $f$ and $g_i$, $i\in J$, with
\[ p(n) = \sum_{\bfa \in \mathbb{Z}_+^n} N(n,\bfa,\bfw) \]
denoting the total number of objects of weight $n$.  We now suppose that our combinatorial objects are chosen uniformly at random.  Then the number of components of size~$i$ is a random variable, say with distribution $C_i$, $i=1,\ldots,n$, and $C = (C_1, \ldots, C_n)$ is the joint distribution of \emph{dependent} random component sizes that satisfies $C \cdot \bfw= n$.  The distribution of $C$ is given by
\begin{equation}\label{C distribution}
 \P(C = \bfa) = 1(\bfw \cdot \bfa = n) \frac{f(J,n)}{p(n)} \prod_{i\in J} g_i(a_i).
 \end{equation}
For each $x>0$, let independent random variables $Z_i$, $i\in J$, have distributions
\begin{equation}\label{Z distribution}
 \P(Z_i = k) = c_i(x)\, g_i(k)\, x^{w_ik}, \qquad i\in J,
 \end{equation}
where $c_i$, $i\in J$, are the normalization constants, given by
\[ c_i = \left( \sum_{k \geq 0} g_i(k) x^{w_ik} \right)^{-1}. \]
Now we can state the following theorem.

     \begin{theorem}[\cite{IPARCS}]
Assume $J \subset \{1,\ldots, n\}$.  Let $Z_J = (Z_i)_{i\in J}$ denote a vector of independent random variables with distributions given by Equation~\eqref{Z distribution}.  Let $C_J = (C_i)_{i\in J}$ denote the stochastic process of random component sizes with distribution given by Equation~\eqref{C distribution}.  Then
\[ C_J =^d (Z_J | T = n). \]
Furthermore,
\begin{equation}\label{Tn}
 \P(T = n) = \frac{p(n)}{f(J,n)} x^n \prod_{i\in J} c_i(x). 
 \end{equation}
     \end{theorem}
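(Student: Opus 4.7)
The plan is a direct Bayes-rule computation, exploiting the fact that the exponential factor $x^{w_i a_i}$ in the tilted distribution \eqref{Z distribution} is arranged precisely so that, after conditioning on $\{T = n\}$, the $x$-dependence factors out uniformly across all admissible completions.

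First, for any $\bfa = (a_i)_{i \in J}$ with $\bfw \cdot \bfa = n$, I would use the independence of the family $(Z_i)_{i \in J}$ together with the explicit form \eqref{Z distribution} to write
\[ \P(Z_J = \bfa) \;=\; \prod_{i \in J} c_i(x)\, g_i(a_i)\, x^{w_i a_i} \;=\; x^n \prod_{i \in J} c_i(x)\, g_i(a_i). \]
The exponent collapses to $x^n$ precisely because of the constraint $\sum_{i \in J} w_i a_i = n$; crucially, this factor is the same for every admissible $\bfa$ and thus behaves as a constant in the conditional probability.

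Second, I would sum this identity over admissible $\bfa$ to evaluate the normalizing denominator:
\[ \P(T = n) \;=\; x^n \prod_{i \in J} c_i(x) \sum_{\bfa:\, \bfw \cdot \bfa = n} \prod_{i \in J} g_i(a_i). \]
The inner sum equals $p(n)/f(J,n)$, as follows directly from the identity $N(n,\bfa,\bfw) = \mathbbm{1}(\bfw \cdot \bfa = n)\, f(J,n) \prod_{i \in J} g_i(a_i)$ combined with the definition $p(n) = \sum_{\bfa} N(n,\bfa,\bfw)$, since $f(J,n)$ does not depend on $\bfa$. Rearranging yields \eqref{Tn}.

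Finally, Bayes' rule, combined with \eqref{C distribution}, gives for any admissible $\bfa$,
\[ \P(Z_J = \bfa \mid T = n) \;=\; \frac{\P(Z_J = \bfa)}{\P(T = n)} \;=\; \frac{f(J,n)}{p(n)} \prod_{i \in J} g_i(a_i) \;=\; \P(C_J = \bfa), \]
which proves the distributional identity $C_J =^d (Z_J \mid T = n)$. There is essentially no obstacle once \eqref{Z distribution} and \eqref{C distribution} are unpacked; the only thing to notice is that the $x$-tilt was engineered to cancel under conditioning, so the parameter $x > 0$ is free and plays no role in the resulting conditional law. In effect, \eqref{Z distribution} is a Boltzmann-type tilt designed so that conditioning recovers the uniform model \eqref{C distribution}, and the identity \eqref{Tn} is the partition-function bookkeeping that accompanies it.
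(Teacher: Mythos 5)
Your proof is correct: the computation of $\P(Z_J=\bfa)$ via independence, the collapse of the exponent to $x^n$ on $\{\bfw\cdot\bfa=n\}$, the summation giving \eqref{Tn}, and the Bayes-rule cancellation recovering \eqref{C distribution} are all valid, and this is the standard argument for this result. Note that the paper itself states this theorem as a citation to \cite{IPARCS} and supplies no proof, so there is nothing in the source to compare against; your argument matches the canonical one from that reference.
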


\begin{remark}\label{always use PDC} {\rm 
The \WTGL\ algorithm for such combinatorial classes has cost
\[ \WTGLTIME(Z_J) = O\left( \TIME(Z_J)\, \frac{f(J,n)}{p(n)} x^{-n} \prod_{i\in J} c_i(x)^{-1} \right). \]
The form of the condition $\{T=n\}$ implies that the \PDCTSH\ algorithm has $|I|=1$, i.e., $I$ consists of a single index.  By Theorem~\ref{discrete speedup theorem}, we have for any $i \in J$,
\[ \speedup = \Omega\left(\max_{k} c_i(x)\, g_i(k)\, x^{w_ik}\right)^{-1}. \]
The optimal choice of $i$ is one that minimizes this maximal probability.  In fact, \emph{any choice of $i$} will provide a speedup, which is an even more compelling reason to use PDC in this setting.  
}\end{remark}

\subsection{The Boltzmann Sampler}
\label{Boltzmann}
The Boltzmann sampler is a popular approach to random generation of combinatorial structures based on generating functions; see \cite{DuchonF, Boltzmann, Flajolet}.  In terms of the structures introduced in Section~\ref{sect:iparcs}, the Boltzmann sampler is equivalent to sampling from the distribution $\L(\X)$, and accepting \emph{all} samples, regardless of whether they lie in the set $E$. 
While this is not exact sampling, for the purposes of obtaining large scale characteristics, this approach has proven to be fruitful and reasonably accurate for many statistics of interest; see \cite{duchon2011random} and the references therein.

In terms of exact sampling, the typical recommended procedure is to perform hard rejection sampling on Boltzmann samples; see \cite[Section 5.2]{duchon2011random}. 
Recently, several authors have noted various ways in which to obtain exact sampling using Boltzmann sampling; see \cite{PrePDC, binomial}, based on an original approach used in \cite{alonso}. 
The approach is equivalent to self-similar PDC (see \cite{PDC} and also Remark~\ref{self-similar}), where one divides the sample space into two roughly equal-sized parts (in terms of randomness, not necessarily in terms of component-sizes), each a recursively defined copy of the original, samples each separately, and then pieces them together.  
The main caveat is the ability to compute the appropriate rejection probabilities, which are often in a form which can be approximated by local central limit theorems. 
In the case of Motzkin words in \cite{alonso}, the rejection probabilities were computable since they were written as a quotient of certain binomial coefficients. 

While we envision the ultimate goal of random sampling of combinatorial structures to be self-similar PDC, the computing of rejection probabilities is a major impediment. 
Once such probabilities become accessible, PDC deterministic second half may not be as competitive. 
Until then, we stress that the main utility of PDC deterministic second half is the fact that \emph{no local limit theorems or detailed knowledge of the sample space is required knowledge}, only the explicit calculation of the distribution and its maximum in~\eqref{eqt}; see Remark~\ref{always use PDC}.

 \subsection{Example: Integer Partitions}
 \label{sect:ip}
     A random unrestricted integer partition of (non--random) size~$n$ is described by $J = \{1,\ldots,n\}$, $w(i) = i$, $f(J,n) = 1, g_i(a_i) = 1$, $p(n)$ is the number of partitions of $n$, usually denoted by $p(n)$.  Hence, for each $0<x<1$, we have normalization factors $c_i = (1-x^i)$, so Equation~\eqref{Z distribution} specializes to
\[ \P(Z_i = k) = x^{i k}(1-x^i), \qquad 0<x<1, \ i=1,\ldots n, \]
i.e., $Z_i$ is geometrically distributed with parameter $1-x^i$.  
  In this example, $Z_i$ denotes the number of parts of size $i$ in a random partition (of random size).
  
  The probability that we generate a particular partition of $n$, with multiplicities $(c_1, \ldots, c_n)$,  is given by 
\[ \P(Z_1=c_1, \ldots, Z_n = c_n) = \prod_{i=1}^n \P(Z_i = c_i) = \prod_{i=1}^n x^{i\, c_i} (1-x^i) = x^{\sum_{i=1}^n i\, c_i} \prod_{i=1}^n (1-x^i). \]
Then, since each partition of $n$ satisfies $\sum_{i=1}^n i\, c_i = n$, we have
\[ \P(Z_1=c_1, \ldots, Z_n = c_n) = x^n \prod_{i=1}^n (1-x^i). \]
In other words, only the size of the partition determines its likelihood of being generated using this approach, and all partitions of the same size are equally likely to appear.  Thus, either by summing over all partitions of $n$, or by Equation~\eqref{Tn} directly, we have
\[ \P(T = n) =  p(n)\, x^n \prod_{i=1}^n (1-x^i).   \]
We would like to maximize this probability, and since this formula holds for all $0<x<1$, we can find an expression for $x$ for which this expression is at its maximum.  It was shown in \cite{Fristedt} (see also \cite{Temperley}) that the choice $x = e^{-c/\sqrt{n}}$, $c = \pi/\sqrt{6}$, is particularly optimal, and produces
\[ \P(T=n) \sim \frac{1}{\sqrt[4]{96}\, n^{3/4}}. \]
Thus, the \WTGL\ algorithm has cost
\[ \WTGLTIME(\X') = O\left(\TIME(\X) \, n^{3/4} \right). \]
It was shown in \cite{PDC} that for any selected index $i\in\{1,\ldots,n\}$, the \PDCTSH\ algorithm obtains a speedup to \WTGL\ of size
\[ \speedup  = \Omega\left(\max_{j\geq 0} \left(x^{i j} (1-x^i) \right)\right)^{-1}  = \Omega \left(1-x^i\right)^{-1} = \Omega\left( \frac{\sqrt{n}}{i\, c} \right). \]
 Thus, the optimal choice is $I = \{1\}$, and the total time for the \PDCTSH\ algorithm is
 \[ \TIMEDDSH(\X') = O\left( \TIME(\X)\, n^{1/4}\right). \]
     
\begin{remark}\label{self-similar}{\rm
It was shown in \cite{PDC} that by using $I = \{1, 3, 5, \ldots\}$, i.e., all odd parts, one can apply a self--similar recursive PDC algorithm for which the total time of the algorithm is $O(\TIME(\X))$; this method is PDC but not \PDCTSH.  The algorithm relies on a bijection and the fact that 
the rejection function can be computed to arbitrary accuracy efficiently using the analysis in~\cite{HR, johansson2012efficient, Lehmer, Rademacher}, and also the log-concavity of the partition function for all $n \geq 26$ due to~\cite{Nicolas}; see also~\cite{logconcave}. 
Thus, for \emph{unrestricted} integer partitions we prefer the self--similar algorithm, but in general such an advantageous structure may not be known. 
}\end{remark}

When partitions are restricted to have distinct parts, then we have  $J = \{1,\ldots, n\}$, $w(i) = i$, $f(J,n) = 1$, $g_i(a_i) = 1(a_i \leq 1)$, $p(n)$ is the number of partitions of $n$ into distinct parts, usually denoted by $q(n)$.  Then we have normalization constants $c_i = (1+x^i)^{-1}$, so
\[\P(Z_i = k) = \frac{x^{i k}}{1+x^{i k}}, \qquad k\in \{0,1\}, \ 0<x<1, \ i=1,\ldots n, \]
i.e., $Z_i$ is Bernoulli with parameter $\frac{x^i}{1+x^i}$.  For any $i$, the speedup is thus
\[ \speedup = \Omega\left(\max_{k\in\{0,1\}} \frac{x^{i\,k}}{1+x^{i\,k}} \right)^{-1}  = \Omega \left( \frac{x}{1+x}\right)^{-1} = \Omega(1). 
\]
Thus, for partitions into distinct parts, \PDCTSH\ offers only a constant factor improvement.

\subsection{Example: Selections}

Integer partitions of size~$n$ into distinct parts is an example of a selection: each element $\{1,2,\ldots,n\}$ is either in the partition or not in the partition.  Selections in general allow $m_i$ different types of a component with weight $i$.  For integer partitions into distinct parts, this would be like assigning $m_i$ colors to integer $i$, and allowing at most one component of size $i$ of each color.  Then we have for all $0<x<1$
\[ \P(Z_i = k) = \binom{m_i}{k} \left(\frac{x^i}{1+x^i}\right)^k\left(\frac{1}{1+x^i}\right)^{m_i-k}, \]
which is binomial.  
The \PDCTSH\ algorithm using $I = \{i\}$ has
\[ \speedup = \Omega\left( \max_i \P\left(Z_i = \frac{m_i x^i}{1+x^i}\right)^{-1}\right) = \Omega\left( \sqrt{\frac{m_i\, x^i}{(1+x^i)^2}} \right)  = \Omega\left( \sqrt{\Var(Z_i)}\right), \] 
where the final two equalities follow since $Z_i$ is a binomial random variable, whence the largest point probability is centered at its expectation, and is approximately $1/\sqrt{2\pi \Var(Z_i)}$.

\subsection{Example: Multisets}

Unrestricted integer partitions of size~$n$ is an example of a multiset: each element $\{1,2,\ldots, n\}$ can appear any number of times in the partition.  Multisets in general allow $m_i$ different types of a component with weight $i$, similar to selections.  We have for all $0<x<1$
\[ \P(Z_i = k) = \binom{m_i+k-1}{k} (1-x^i)^{m_i}x^{i k},\qquad k=0, 1,\ldots, \]
which is negative binomial.  The mode of the negative binomial distribution is given by the mass at $\lfloor (m_i-1) x^i / (1-x^i)\rfloor$.  
Similarly as with selections, we have 
\[ \speedup = \Omega\left( \max_i \P\left(Z_i = \frac{m_i x^i}{1-x^i}\right)^{-1}\right) = \Omega\left( \sqrt{\frac{m_i\, x^i}{(1-x^i)^2}} \right)  = \Omega\left( \sqrt{\Var(Z_i)}\right). \]

\subsection{Assemblies}
\label{sect:assemblies}
Assemblies are described using $Z_i$ as Poisson$(\lambda_i)$, where $\lambda_i = \frac{m_i x^i}{i!}$, $i=1,\ldots,n$, and where $m_i$ denotes the number of components of size $i$, and $x>0$.  
We have
\begin{equation}\label{Poisson prob}
 \P(Z_1 = c_1, \ldots, Z_n = c_n) = \prod_{i=1}^n \frac{ m_i^{c_i} x^{i\, c_i}}{i!^{c_i} c_i!} e^{-\lambda_i} = x^{n} e^{-\sum_{i=1}^n \lambda_i} \prod_{i=1}^n \frac{m_i^{c_i}}{i!^{c_i} c_i!}.
 \end{equation}
 Here again since the random variables are Poisson, the local central limit theorem implies for $\lambda_i$ large that we have 
 \[
  \speedup =  \Omega\left( \sqrt{\Var(Z_i)}\right) = \Omega\left(\sqrt{\lambda_i}\right),
 \]
 hence we should select the index $i$ with the largest variance to obtain the largest speedup.

For set partitions, $C_i$ denotes the number of blocks of size $i$ in a random set partition of size~$n$, $i=1,\ldots,n$.  In this case $m_i = 1$, and we have $\lambda_i = x^i/i!$, for $x>0$.  In this case the probability of the event $\{T=n\}$ is maximized by the value of $x$ such that $x\, e^x = n$, for which $x = \log(n)$ serves as a reasonable approximation for large $n$.\footnote{In \cite{deBruijn}, it is shown that 
\[
x \sim \log(n) - \log(\log(n)) + O\left( \frac{\log \log n}{\log n}\right) .
\]}
It was shown in  \cite{PDC} that $I = [\log(n)]$ is a particularly good choice, since
\[\lambda_I \sim \frac{\log(n)^{\log(n)}}{(\log(n))!} \sim \frac{e^{\log(n)}}{\sqrt{2\pi\log(n)}} \sim \frac{n}{\sqrt{2\pi\log(n)}}, \]
hence,
\[ \speedup = \Omega\left( \sqrt{2\pi\lambda_I}\right) = \Omega\left(\sqrt{n} / \sqrt[4]{\log(n)}\right).  \]
It fact, it was shown in \cite{PittelSetPartitions} that 
\[ \P(T=n) = O\left(\sqrt{n\, \log(n)}\right), \]
whence the total time for \PDCTSH\ is $O(\log^{5/4}(n))$.  
One can even improve upon this rejection rate using the recursive method, see~\cite{DeSalvoImprovements}. 

\subsection{Plane partitions} \label{sect:pp}
We now recall the third example of \cite[Section 3.3.1]{PDC}, which improves upon an algorithm in \cite{BodiniPlane} for random sampling of plane partitions. 
The first step of the algorithm is to sample from a rectangular grid of random variables $\{Z_{i,j}(x)\}_{1\leq i,j\leq n}$, where $Z_{i,j}$ is geometrically distributed with parameter $x^{i+j+1}$, with $x \sim 1 - (2 \xi(3)/n)^{1/3}$ chosen to maximize the probability of the event $E = \left\{\sum_{i,j \geq 1} (i+j+1) Z_{i,j} = n\right\}$. 
\begin{lemma}{\cite[Lemma 7]{BodiniPlane}}
Let $\X = \{Z_{i,j}(x)\}_{1\leq i,j\leq n}$, where $Z_{i,j}$ is geometrically distributed with parameter $x^{i+j+1}$, with $x \sim 1 - (2 \xi(3)/n)^{1/3}$; let $E = \left\{\sum_{i,j \geq 1} (i+j+1) Z_{i,j} = n\right\}$.
Random sampling of $\L(\X)$ can be performed in $O(n^{2/3})$ operations.  
Furthermore, the \WTGL\ algorithm, i.e., the Boltzmann sampler with a hard rejection step, has probability $O(n^{2/3})$ of hitting the target. 
Therefore, sampling of $\L(\X')$ has total $O(n^{4/3})$ operations.
\end{lemma}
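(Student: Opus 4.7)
The argument splits into three pieces matching the three assertions.

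First (Boltzmann cost), I would show that the expected number of nonzero coordinates among the $Z_{i,j}$ is $O(n^{2/3})$. Since $Z_{i,j}$ is geometric with $\P(Z_{i,j}>0)=x^{i+j+1}$, this expectation is
\[ \sum_{1\le i,j\le n} x^{i+j+1} \;\le\; x\,\Bigl(\tfrac{x}{1-x}\Bigr)^{2} \;=\; O\bigl((1-x)^{-2}\bigr) \;=\; O(n^{2/3}), \]
using $1-x\sim(2\xi(3)/n)^{1/3}$. To actually achieve $O(n^{2/3})$ operations rather than $O(n^{2})$, one avoids visiting every index: within a fixed row $i$, the indicators $\mathbbm{1}\{Z_{i,j}>0\}$ are stochastically dominated by i.i.d.\ Bernoullis of parameter $x^{i+2}$, so one can jump to the next candidate $j$ via a geometric random variable and then apply an acceptance step of probability $x^{j-1}$. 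Summing the expected number of jumps over all rows reproduces the $O(n^{2/3})$ bound, and each accepted coordinate is completed by one further geometric draw to supply its actual value.

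Second (hit probability), I would estimate $\P(E)=\P(T=n)$, where $T=\sum_{i,j}(i+j+1)Z_{i,j}$. The calibration $x\sim 1-(2\xi(3)/n)^{1/3}$ is precisely the saddle-point value making $\mathbb{E}\,T=n$; a direct Euler--Maclaurin computation gives $\mathbb{E}\,T\sim 2\xi(3)(1-x)^{-3}$ and $\Var T\asymp(1-x)^{-4}\asymp n^{4/3}$. Since $T$ is an integer linear combination of independent lattice variables, a Gnedenko-type local central limit theorem then yields
\[ \P(T=n) \;=\; \Theta\bigl((\Var T)^{-1/2}\bigr) \;=\; \Theta(n^{-2/3}), \]
so each \WTGL\ attempt succeeds with probability of this order (the statement as printed clearly intends this lower bound). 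An alternative route is to invoke \eqref{Tn} together with a Wright-type asymptotic for the plane-partition counting function, obtained by saddle-point analysis of $\prod_{k\ge 1}(1-x^k)^{-k}$ at its natural boundary.

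Third (combine), each attempt costs $O(n^{2/3})$ in expectation and succeeds independently with probability $\Theta(n^{-2/3})$, so by Wald's identity the total expected cost is $O(n^{2/3})\cdot\Theta(n^{2/3})=O(n^{4/3})$. The main technical obstacle is step two: the mean and variance computations are routine Euler--Maclaurin, but verifying the Cram\'er smoothness condition (or, equivalently, carrying the saddle-point analysis through the singularity of $\prod_{k\ge 1}(1-x^k)^{-k}$ at $x=1$ with enough uniformity) is where the genuine analytic work lies; step one is essentially a geometric-skipping bookkeeping exercise, and step three is a one-line application of Wald.
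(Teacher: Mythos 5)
The first thing to note is that the paper does not prove this statement at all --- it is quoted verbatim from \cite[Lemma 7]{BodiniPlane} and used as a black box, so there is no in-paper argument to measure you against. Judged on its own terms, your second and third parts follow the standard and correct route: the calibration $1-x\sim(2\zeta(3)/n)^{1/3}$ is exactly the saddle point making $\mathbb{E}\,T\sim 2\zeta(3)(1-x)^{-3}=n$, the variance estimate $\mathrm{Var}\,T\asymp(1-x)^{-4}\asymp n^{4/3}$ is right, a local limit theorem (span $1$, since the weights $i+j+1$ have gcd $1$) gives $\mathbb{P}(T=n)=\Theta(n^{-2/3})$ --- evidently what the printed ``probability $O(n^{2/3})$'' intends, read as ``$O(n^{2/3})$ expected attempts'' --- and Wald's identity then yields the $O(n^{4/3})$ total. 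You are also right that verifying the local limit theorem uniformly is where the genuine analytic work sits.

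The genuine gap is in your first part. The expectation bound $\sum_{i,j}x^{i+j+1}=O\bigl((1-x)^{-2}\bigr)=O(n^{2/3})$ is correct, but the skipping scheme you propose does not realize it. Dominating row $i$ by i.i.d.\ Bernoulli$(x^{i+2})$ variables produces, in expectation, $n\,x^{i+2}$ candidate positions in that row, each consuming a random number for the thinning step, and summing over $i$ gives $\sum_i n\,x^{i+2}\approx n x^{3}/(1-x)=\Theta(n^{4/3})$ --- worse by a factor of $n(1-x)\asymp n^{2/3}$ than the number of genuinely nonzero cells, because each row of length $n$ is dominated at the rate of its first cell while only an initial segment of length $O\bigl((1-x)^{-1}\bigr)=O(n^{1/3})$ actually contributes. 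So your claim that ``summing the expected number of jumps over all rows reproduces the $O(n^{2/3})$ bound'' is false as written. To get $O(n^{2/3})$ you must either (i) sample the position of the next nonzero cell from its exact distribution by inverting $j\mapsto 1-\prod_{\ell\le j}(1-x^{k_\ell})$ with a single uniform --- legitimate under the paper's costing model, where arithmetic is free and only calls to the random number generator are charged --- traversing all cells in one global order so the overhead is $O(1)$ rather than one wasted call per row; or (ii) truncate each row at length $\Theta(n^{1/3}\log n)$ and handle the all-zero tail event exactly with one additional Bernoulli draw. Either fix is routine, but one of them is needed before the first assertion of the lemma is actually established.
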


To obtain a plane partition, they apply a bijection in~\cite{PakBijection}, which only requires $O(n \log^3(n))$ operations, which makes the entire algorithm on the same order as the sampling of $\L(\X')$, namely, $O(n^{4/3})$. 
We surmise a self-similar PDC sampling algorithm for $\L(\X')$ exists which is $O(n^{2/3}),$ however, for this particular application, this is not necessary. 
Consider the PDC with partition $A = \{Z_{i,j}\}_{i,j\neq 1}$, and $B = \{Z_{1,1}\}$. 
By Equation~\eqref{discrete speedup}, the speedup over \WTGL\ is given by
\[ \speedup = \Omega \left(\max_k \P(Z_{1,1} = k)\right)^{-1} = \Omega\left(\P(Z_{1,1} = 0)\right)^{-1} = \Omega\left( \frac{1}{1-x} \right) = \Omega \left(\frac{n^{1/3}}{2 \xi(3)} \right). \]

Thus, using \PDCTSH, sampling from $\L(\X')$ can be performed in $O(n)$ total operations, which makes the entire algorithm $O(n \log^3(n))$. 

\subsection{Further restrictions on components}
In addition to the condition that $\{\sum_i w_i Z_i = n\}$, one can further condition, e.g.,  on the event $\{\sum_i Z_i = k\}$, which demands that the total number of components in the random structure with weight $n$ is $k$; see, e.g.,~\cite[Section 8]{IPARCS}.  For integer partitions, this is equivalent to sampling from partitions of size~$n$ with exactly $k$ parts; or, set partitions of size~$n$ with exactly $k$ blocks.  
  In general, this condition takes the form $\{\sum_i u_i Z_i = k\}$ for some set of nonnegative coefficients $u = (u_i)_{i \geq 1}$.  Then, for any $\theta>0$, we let $Z_i$ have distribution given by
\[\P_\theta(Z_i = c_i) =  \frac{\theta^{u_i c_i}}{\e \theta^{u_i Z_i}}\P(Z_i = c_i), \qquad i=1,\ldots, n,
\]
where we assume $\e \theta^{u_i Z_i} < \infty$.  Then, by choosing a value of $\theta$, we can effectively tilt the distribution.

For example, take $u_i = 1$ for all $i$ and  $Z_i$ to be Poisson with parameter $\lambda_i = \frac{\theta x^i}{i}$, $i=1,\ldots,n$, and some $\theta>0$.  Then 
\begin{align}\label{Poisson Ewens}
 \P(Z_1 = c_1, \ldots, Z_n = c_n) & = \frac{x^{n} e^{-\theta\sum_{i=1}^n \lambda_i}}{n!}\frac{n!}{\theta(\theta+1)(\theta+2)\ldots(\theta+n-1)} \prod_{i=1}^n \frac{\theta^{c_i}}{i^{c_i} c_i!} \\
\nonumber  & =  \frac{\theta^{k}}{\theta(\theta+1)(\theta+2)\ldots(\theta+n-1)} \frac{x^{n} e^{-\theta\sum_{i=1}^n \lambda_i}}{n!} n! \prod_{i=1}^n \frac{1}{i^{c_i} c_i!}.
 \end{align}
The joint distribution~$\L(Z_1, \ldots, Z_n)$ is the independent process approximation to the Ewens sampling formula, which is given similarly by  $\L((Z_1, \ldots, Z_n) | E),$ where \\ ${E = \left\{ \sum_{i=1}^n i Z_i = n,\ \  \sum_{i=1}^n Z_i = k\right\},}$ see~\cite{Ewens}; see also~\cite[Section 8]{IPARCS}. 
  Letting $t_A := \sum_{i=3}^n i\, Z_i$ and $s_A := \sum_{i=3}^n Z_i$, there is a \PDCTSH\ algorithm for this family; namely, let $I=\{1,2\}$, then we have
\[ E = \left\{ \sum_{i=1}^n i Z_i = n,\ \  \sum_{i=1}^n Z_i = k\right\},\]
\[ \EI = \left\{(x_3,\ldots,x_n) : \sum_{i=3}^n i\, x_i \leq n,\ \ \sum_{i=3}^n x_i \leq k \right\}, \]
\[E_I = E_{I|\XI} = \left\{(y,z) : y+2z+\sum_{i=3}^n i\, Z_i = n,\ \ y+z+\sum_{i=3}^n Z_i = k \right\}, \]
\[ \L\left(\XI\, \big|\, E\right) = \L\left( (Z_3, Z_4, \ldots, Z_n)\, \bigg|\, \sum_{i=1}^n i\, Z_i = n,\ \sum_{i=1}^n Z_i = m\right),\]
\[ \L\left(T_I\, \big|\, \XI=\xI, E\right) = \L\left( (Z_1, Z_2)\, \big|\, Z_1+2Z_2 = n - t_A,\ Z_1 + Z_2 = m - s_A\right).\]
Thus, conditional on accepting a set of values for $\XI = (Z_3, \ldots, Z_n)$, the values of $Z_1$ and $Z_2$, say $y_I$ and $z_I$, are uniquely determined, hence deterministic.  
The rejection condition is then given by
\[ \left\{\text{$\XI \in \EI$ and $U < \frac{\P(Z_1 = y_I)\P(Z_2 = z_I)}{\max_{j_1, j_2} \P(Z_1 = j_1) \P(Z_2 = j_2)}$}\right\},\]
and the speedup is
\[ \speedup = \Omega\left( \left(\max_{j_1, j_2} \P(Z_1 = j_1) \P(Z_2 = j_2)\right)^{-1} \right).  
\]
Again, we strongly note that no local central limit theorems are required, nor any qualitative or quantitative information pertaining to the Ewen's sampling formula or its approximation; the only requirement is the ability to efficiently compute the largest point probability of a given distribution. 

There is also a two--parameter family which generalizes the Ewen's sampling formula given in \cite{Pitman2}.  In general, the same kind of \PDCTSH\ algorithm can be applied to any set of restrictions, as long as they satisfy the regularity conditions, and the choice of $I$ uniquely determines a completion in the second stage of PDC.

\subsection{Logarithmic Combinatorial Structures}\label{sect:lcs}
While \PDCTSH\ is useful in full generality, and when limited information about the sample space is known, Remark~\ref{self-similar} demonstrates that it is possible to substantially improve on the PDC deterministic second half approach for the sampling of $\L(\X')$ in certain cases. 
One particular class of examples is known as \emph{logarithmic combinatorial structures}, see for example~\cite{Logarithmic}, which is the case 
\[ i\, \e X_i \to \kappa, \qquad i\, \P(X_i = 1) \to \kappa. \]
For assemblies, this is equivalent to $m_i \sim \kappa\, y^i (i-1)!$ as $i\to\infty$, for some $y>0$ and $\kappa>0$; for selections and multisets, this is equivalent to $m_i \sim \kappa\, y^i / i$ as $i\to\infty$, for some $y>1$ and $\kappa>0$.

One particular example, which demonstrates the utility of a more detailed understanding of the underlying structure, is for random sampling of permutations according to cycle structure. 
Let $C_i(n)$ denote the number of cycles of length $i$ in a random permutation of $n$, and let $X_i$ be Poisson with parameter $1/i$, $i=1,2,\dots$. 
A classical result, see \cite{Goncharov, Kolchin}, is that as $n$ tends to infinity, we have
\[ (C_1(n), C_2(n), \ldots ) \stackrel{D}{\to} (Z_1, Z_2, \ldots). \]
The Feller coupling is a way to sample from $(C_1(n), C_2(n), \ldots)$ directly; 
see~\cite{FellerCoupling, Renyi}; see also \cite{PPEwens, Logarithmic}.  
Start with 1 by itself in a cycle.  With probability $1/n$, close off the cycle, and start a new cycle with 2 by itself; otherwise, add a uniform number $\{2,\ldots,n\}$ to the cycle with $1.$  Continue this process, closing off the cycle with probability $1/(n-1)$; otherwise, adding a number from the elements remaining uniformly at random to the currently open cycle.
We have
\[ C_i(n) = \#\{\text{exactly $i-1$ times not closing a cycle followed by closing the cycle}\}. \]
According to our costing scheme, which ignores the sizes of the values, this algorithm is $O(n)$, which is asymptotically best possible\footnote{Note that while the entropy lower bound is $\log n! = \Omega(n \log(n))$, this assumes that the complexity of arithmetic manipulations of a number $n$ grows like $O(\log(n))$, whereas we assume this cost is fixed at $O(1)$ for all $n$.}.

This example serves to remind the reader that while \PDCTSH\ offers an improvement to \WTGL, and is often just as practical given little knowledge of the underlying sample space, an improved algorithm can often be fashioned using more sophisticated arguments.

\section{Acknowledgements}
We gratefully acknowledge Richard Arratia, Tom Liggett, and Georg Menz for helpful suggestions, and Igor Pak for introducing us to the area of combinatorial polytopes and help with the literature. 
We also thank an anonymous referee for helpful comments which led to the improvement of Section~\ref{combinatorial classes}.

\bibliographystyle{plain}                                              
\bibliography{../../../master_bib}

\end{document}